\theoremstyle{definition}
\newtheorem{definition}{Definition}
\newtheorem*{remarkson}{Remarks}
\theoremstyle{plain}
\newtheorem{theorem}{Theorem}
\newtheorem{lemma}[definition]{Lemma}
\newtheorem{corollary}{Corollary}
\theoremstyle{remark}
\newcommand{\C}{\mathbb{C}}
\newcommand{\G}{\mathbb{G}}
\newcommand{\N}{\mathbb{N}}
\newcommand{\Pb}{\mathbb{P}}
\newcommand{\Q}{\mathbb{Q}}
\newcommand{\R}{\mathbb{R}}
\newcommand{\Z}{\mathbb{Z}}
\newcommand{\Gcal}{\mathcal{G}}
\newcommand{\Hcal}{\mathcal{H}}
\newcommand{\Tcal}{\mathcal{T}}
\newcommand{\diag}{\operatorname{diag}\,}
\newcommand{\Sp}{\operatorname{Sp}}
\let\leq\leqslant
\let\geq\geqslant
\newenvironment{lcase}{\left\lbrace \begin{aligned}}{\end{aligned}\right.}
\begin{document}

\begin{center}
\begin{huge}
\begin{spacing}{1.0}
\textbf{The Hecke algebras for the orthogonal group $SO(2,3)$ and the paramodular group of degree $2$}  
\end{spacing}
\end{huge}

\bigskip
by
\bigskip

\begin{large}
\textbf{Jonas Gallenkämper\footnote{Jonas Gallenkämper, Lehrstuhl A für Mathematik, RWTH Aachen, D-52056 Aachen, jonas.gallenkaemper@mathA.rwth-aachen.de}},
%\textbf{Bernhard Heim\footnote{Bernhard Heim, GUtech, Way No. 36, Building No. 331, North Ghubrah, Muscat, Sultanate of Oman, bernhard.heim@gutech.edu.om}} and 
\textbf{Aloys Krieg\footnote{Aloys Krieg, Lehrstuhl A für Mathematik, RWTH Aachen, D-52056 Aachen, krieg@rwth-aachen.de}}
\end{large}
\vspace{1cm}\\
12 March 2018
\vspace{1cm}
\end{center}
\begin{abstract}
In this paper we consider the integral orthogonal group with respect to the quadratic form of signature $(2,3)$ given by 
$\left(\begin{smallmatrix}
   0 & 1 \\ 1 & 0                                                                          
 \end{smallmatrix}\right) \perp 
\left(\begin{smallmatrix}
   0 & 1 \\ 1 & 0                                                                          
 \end{smallmatrix}\right) \perp (-2N)$ for squarefree $N\in \N$. The associated Hecke algebra is commutative and the tensor product of its primary components, which turn out to be polynomial rings over $\Z$ in $2$ algebraically independent elements.  \\
 The integral orthogonal group is isomorphic to the paramodular group of degree $2$ and level $N$, more precisely to its maximal discrete normal extension. The results can be reformulated in the paramodular setting by virtue of an explicit isomorphism. The Hecke algebra of the non-maximal paramodular group inside $\mathrm{Sp}(2;\Q)$ fails to be commutative if $N> 1$.
\end{abstract}

\newpage
\section{Introduction}

The Hecke theory plays an important role in the arithmetic theory of modular forms. Dealing with Siegel modular forms, a structure theorem was derived by Shimura \cite{Sh} (cf. \cite{An}, \cite{Fr}). Considering paramodular groups, such a result does not seem to be known. Dealing with orthogonal groups, authors (cf. \cite{HS}, \cite{Su}) usually define their Hecke algebra as the tensor product of local components. On the other hand, several authors have investigated the connection between the paramodular group and the orthogonal group $SO(2,3)$ (cf. \cite{G}, \cite {GH1}, \cite{GH2}, \cite{RS}).

In this paper we derive a structure result for the Hecke algebra associated with $SO(2,3)$. By means of an explicit isomorphism we obtain analogous results for the paramodular group of degree $2$ and squarefree level $N$. Moreover, we show that the Hecke algebra for the non-maximal paramodular subgroup inside $\mathrm{Sp}_2(\Q)$ also coincides with the tensor product of its primary components, but fails to be commutative similar to the case of the Fricke groups in \cite{K2}. Most of these results are contained in \cite{Ga}, where also results on the Hecke theory for $O(2,n+2)$ are given.

Let us fix some notation. Let $(U,G)$ be a Hecke pair, i.e. $U$ is a subgroup of $G$ and each double coset $UgU$, $g\in G$, decomposes into finitely many right cosets $Uh$, $h\in G$. Denote by $\Hcal(U,G)$ the Hecke algebra over $\Z$ of $(U,G)$ just as in \cite{An}, \cite{Fr}, \cite{K1}, \cite{Sh}.

\section{The orthogonal group $SO(2,3)$}

We fix further notation throughout the whole paper. Given $N\in\N$ let 
\begin{gather*}\tag{1}\label{gl_1}
 S_N = \begin{pmatrix}
        0 & 0 & 1  \\  0 & -2N & 0  \\  1 & 0 & 0
       \end{pmatrix}, \quad 
 \widehat{S}_N = \begin{pmatrix}
        0 & 0 & 1  \\  0 & S_N & 0  \\  1 & 0 & 0
       \end{pmatrix}    
\end{gather*}
be even symmetric matrices of signature $(1,2)$ resp. $(2,3)$. Consider the attached special orthogonal groups
\begin{align*}
 SO(S_N;\R):&=\{K\in SL_3(\R);\;S_N[K] = S_N\},  \\
 SO(\widehat{S}_N;\R):&=\{M\in SL_5(\R);\;\widehat{S}_N[M] = \widehat{S}_N\}, 
\end{align*}
where the prime stands for the transpose and $A[B]: = B'AB$ for matrices $A,B$ of suitable size. Let $SO_0(S_N;\R)$ resp. $SO_0(\widehat{S}_N;\R)$ stand for the connected component of the identity matrix $E$ (of suitable size), which was characterized in \cite{K3}, sect. 2. For the integers instead of the reals we use the analogous notations for the subgroups, in particular
\[
 \Gamma_N:=SO_0(S_N;\Z), \quad \widehat{\Gamma}_N:= SO_0(\widehat{S}_N;\Z).
\]
Just as in \cite{K3} we consider the matrices
\begin{gather*}\tag{2}\label{gl_2}
 M_{\lambda}:= \begin{pmatrix}
                1 & -\lambda'S_N & -\frac{1}{2}S_N[\lambda]  \\
                0 & E & \lambda  \\
                0 & 0 & 1
               \end{pmatrix}, 
\end{gather*}              
\begin{gather*}\tag{2'}\label{gl_2'}               
 \widetilde{M}_{\lambda}:= \begin{pmatrix}
                1 & 0 & 0  \\
                \lambda & E & 0  \\
                -\frac{1}{2}S_N[\lambda] & -\lambda'S_N & 1
               \end{pmatrix},\quad\lambda\in\R^3,
\end{gather*}
\begin{gather*}\tag{3}\label{gl_3}
 J^{\ast}:= \begin{pmatrix}
      0 & 0 & -1 \\ 0 & V & 0 \\ -1 & 0 & 0
     \end{pmatrix}, \quad 
 V = \begin{pmatrix}
      0 & 0 & -1 \\ 0 & 1 & 0 \\ -1 & 0 & 0
     \end{pmatrix},    
\end{gather*}
\begin{gather*}\tag{4}\label{gl_4}
 M_F:= \begin{pmatrix}
      F & 0 & 0 \\ 0 & 1 & 0 \\ 0 & 0 & F^{\ast}
     \end{pmatrix}, \;\; 
 F = \begin{pmatrix}
      \alpha & \beta \\ \gamma & \delta
     \end{pmatrix} \in SL_2(\R), \;\;
 F^{\ast} = \begin{pmatrix}
      \alpha & -\beta \\ -\gamma & \delta
     \end{pmatrix},
\end{gather*}
\begin{gather*}\tag{4'}\label{gl_4'}
 \widetilde{M}_F:= \begin{pmatrix}
      \alpha E & 0 & \beta I \\ 0 & 1 & 0 \\ \gamma I & 0 & \delta E
     \end{pmatrix}, \;\;
 F = \begin{pmatrix}
      \alpha & \beta \\ \gamma & \delta
     \end{pmatrix} \in SL_2(\R), \;\;
 I = \begin{pmatrix}
      -1 & 0 \\ 0 & 1
     \end{pmatrix},
\end{gather*}
\begin{gather*}\tag{5}\label{gl_5}
 \widehat{K}:= \begin{pmatrix}
      1 & 0 & 0 \\ 0 & K & 0 \\ 0 & 0 & 1
     \end{pmatrix}, \quad K\in SO_0(S_N;\R)
\end{gather*}
in $SO_0(\widehat{S}_N;\R)$ as well as 
\begin{gather*}\tag{6}\label{gl_6}
 K_{\mu}:= \begin{pmatrix}
      1 & 2N\mu & N\mu^2 \\ 0 & 1 & \mu \\ 0 & 0 & 1
     \end{pmatrix}, \quad 
 \widetilde{K}_{\mu}:= \begin{pmatrix}
      1 & 0 & 0 \\ \mu & 1 & 0\\ N\mu^2 & 2N\mu & 1
     \end{pmatrix}, \;\; \mu\in\R,
\end{gather*}
in $SO_0(S_N;\R)$.

\begin{lemma}\label{Lemma_1} %%% 1 
 Let $N\in\N$ be squarefree.
\begin{enumerate}[a)]
\item Given $g\in\Z^3$ with $S_N[g] = 0$, there exists a matrix $K\in \Gamma_N$ such that 
\[
 Kg = (\gamma,0,0)', \;\; \gamma=\pm gcd(g).
\]
\item Given $g\in\Z^5$ with $\widehat{S}_N[g] = 0$, there exists a matrix $M\in \widehat{\Gamma}_N$ such that
\[
 Mg = (\gamma,0,0,0,0)', \;\; \gamma=gcd(g).
\]
\end{enumerate}
\end{lemma}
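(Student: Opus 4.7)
My plan is to reduce each primitive isotropic vector to the standard form $\gamma e_1$ by composing the explicit generators already introduced in \eqref{gl_2}--\eqref{gl_6}, using the squarefree hypothesis at a single key step. In both parts one may assume $\gcd(g)=1$, since the groups act $\Z$-linearly and preserve the gcd.

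For (a), write $g=(a,b,c)'$, so that isotropy reads $ac=Nb^2$. The squarefree hypothesis first gives $\gcd(a,c)=1$: a common prime divisor $p$ would force $p^2\mid Nb^2$ and hence $p\mid b$, contradicting primitivity of $g$. I would then apply the Eichler moves $K_\mu,\widetilde K_\mu$ of \eqref{gl_6} alternately to reduce $b$ modulo $c$ and modulo $a$, preserving $ac=Nb^2$ at each step. When this reduction terminates at $b=0$, the relation forces $a=0$ or $c=0$, leaving $g=(\pm 1,0,0)'$ or $(0,0,\pm 1)'$. The remaining steps---the swap from $(0,0,\pm 1)'$ to $(\pm 1,0,0)'$, and any residual reductions when the Eichler moves stall at a nonzero minimal $b$---are handled by Atkin--Lehner-type involutions in $\Gamma_N$, one associated to each prime divisor of $N$; these exist precisely because $N$ is squarefree. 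The $\pm$ in the statement records that $\Gamma_N\subset SL_3(\Z)$ is orientation-preserving and cannot flip the sign of $e_1$.

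For (b) the same strategy applies with the richer generator list. Writing $g=(g_1,v',g_5)'$ with $v=(g_2,g_3,g_4)'\in\Z^3$, I would proceed in three steps. (i) Use $M_F$ with $F\in SL_2(\Z)$---a Euclidean reduction on $(g_4,g_5)$---to arrange $g_5=0$; the isotropy condition $\widehat S_N[g]=0$ then collapses to $S_N[v]=0$. (ii) Apply $\widehat K$ with $K\in\Gamma_N$ supplied by part (a) to reduce $v$ to $(\gcd(v),0,0)'$; this fixes $g_1$ and $g_5$, so $g_5$ remains $0$ and the new $g_4$ also vanishes. (iii) Now $g=(g_1,\gcd(v),0,0,0)'$ with $\gcd(g_1,\gcd(v))=1$ by primitivity, and a final $M_F$ performing an $SL_2(\Z)$-Euclidean reduction on the first two coordinates produces $(\gcd(g),0,0,0,0)'$. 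The absence of $\pm$ in (b) reflects the richer sign-flexibility available inside $\widehat\Gamma_N\subset SL_5(\Z)$.

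The main obstacle is the Atkin--Lehner step in (a): the Eichler moves $K_\mu,\widetilde K_\mu$ alone do \emph{not} generate all of $\Gamma_N$ when $N>1$, as one already sees for $N=2$ and $g=(2,1,1)'$, where the naive reduction cycles back. Producing explicit swap elements in $\Gamma_N$---the Atkin--Lehner involutions indexed by divisors of $N$---is the technical heart of the argument, and this is precisely where the squarefree hypothesis on $N$ is genuinely used.
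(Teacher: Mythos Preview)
Your argument for part (b) is correct and is exactly the paper's proof: use $M_F$ to kill $g_5$, apply $\widehat K$ with $K$ from part (a) to the middle block, then finish with another $M_F$ on the first two coordinates. Nothing more is needed here.

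For part (a) the paper gives no self-contained argument; it simply invokes Corollary~4 and equation~(15) of \cite{K3}. Your direct reduction via $K_\mu,\widetilde K_\mu$ together with Atkin--Lehner involutions is the natural approach, and your observation that $\gcd(a,c)=1$ for primitive isotropic $g$ is correct and is indeed the point where squarefreeness enters. But as you yourself say, the proposal has a genuine gap: you never construct the Atkin--Lehner elements of $\Gamma_N$ (in particular an element swapping the isotropic lines $\Z(1,0,0)'$ and $\Z(0,0,1)'$), nor do you argue that they lie in the \emph{connected} component $SO_0(S_N;\Z)$. Until those elements are exhibited, the proof of (a) is incomplete.

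A small correction to your example: for $N=2$ and $g=(2,1,1)'$ the Eichler reduction does \emph{not} stall---$K_{-1}$ sends $g$ to $(0,0,1)'$ in one step. What stalls is the passage from $(0,0,1)'$ to $(\pm 1,0,0)'$: neither $K_\mu$ nor $\widetilde K_\mu$ ever moves you off the line $\Z(0,0,1)'$ to $\Z(1,0,0)'$, so the swap element is essential already here. A cleaner example where the $b$-reduction itself genuinely stalls is $N=5$, $g=(4,2,5)'$: here $|b|=2$ cannot be decreased by either family, since $2\le |a|/2$ and $2\le |c|/2$. In general your inequality argument shows stalling can only occur for $N\ge 4$, which explains why small squarefree $N$ look deceptively easy.
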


\begin{proof}
a) Apply Corollary 4 and (15) in \cite{K3}.  \\
b) Multiply by a suitable matrix $M_F$, $F\in SL_2(\Z)$, in \eqref{gl_4} in order to assume $g=(\ast,\ast,\ast,\ast,0)'$ without restriction. Then use a) and \eqref{gl_5} in order to obtain $g=(\ast,\ast,0,0,0)'$.  Finally, we get the result by a matrix $M_G$, $G\in SL_2(\Z)$, from \eqref{gl_4}.
\end{proof}

Lemma \ref{Lemma_1} does not hold for arbitrary $N\in \N$ as the example 
\[
 N = 4, \;\; g = (2,1,2)'
\]
in a) shows, because the first and last entry of $Kg$, $K\in\Gamma_N$, are always even.
\bigskip

We use the results in order to obtain suitable representatives for right cosets in 
\[
 \mathcal{G}_N:= SO_0(S_N;\Q), \quad \widehat{\mathcal{G}}_N:=SO_0(\widehat{S}_N;\Q).
\]

\begin{lemma}\label{lemma_2} %%% (2)
Let $N\in\N$ be squarefree.
\begin{enumerate}[a)]
\item Let $\frac{1}{m}K\in\mathcal{G}_N$ with $m\in\N$ and integral $K$. Then the right coset $\Gamma_N K$ contains a unique representative of the form
\begin{gather*}\tag{7}\label{gl_7}
  L = \begin{pmatrix}
      \alpha^{\ast} & 2Nm\mu/\delta^{\ast} & N\mu^2/\delta^{\ast}  \\
      0 & m & \mu  \\
      0 & 0 & \delta^{\ast}
     \end{pmatrix}, \;\; 
     \begin{matrix}
     \alpha^{\ast},\delta^{\ast}\in\N,\;\;\alpha^{\ast}\delta^{\ast}=m^2, \\[1ex]
      \hspace*{-2ex}\mu\in\{0,1,\ldots,\delta^{\ast}-1\} . \qquad
      \end{matrix}
\end{gather*}
\item Let $\frac{1}{m} M\in \widehat{\mathcal{G}}_N$ with $m\in\N$ and integral $M$. Then the right coset $\widehat{\Gamma}_N M$ contains a unique representative of the form
\begin{gather*}\tag{8}\label{gl_8}
 \begin{pmatrix}
  \alpha & a' & \beta  \\ 0 & L & c  \\  0 & 0 & \delta
 \end{pmatrix},\;\; 
 \begin{matrix}
 \alpha,\delta\in \N,\;\alpha\delta = m^2,\;c\in\{0,1,\ldots,\delta-1\}^3,\\[1ex]
  \hspace*{-13ex}\beta=\frac{-1}{2\delta}S_N[c],\;a=-\frac{1}{\delta} L'S_N c,
  \end{matrix}
\end{gather*}
where $L$ has the form \eqref{gl_7} and $\alpha$ is the $gcd$ of the first column of $M$.
\end{enumerate}
\end{lemma}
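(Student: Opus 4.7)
The plan is to use Lemma \ref{Lemma_1} to clear the subdiagonal entries of the first column, exploit the defining relation of the orthogonal group to force an upper-triangular (block) structure, and then reduce the remaining free parameters to the stated ranges by left-multiplication with the explicit matrices \eqref{gl_2}, \eqref{gl_5} and \eqref{gl_6}.

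For part a), note that $\frac{1}{m}K \in \mathcal{G}_N$ implies $K'S_N K = m^2 S_N$; in particular the first column $g := Ke_1$ satisfies $S_N[g] = m^2 (S_N)_{11} = 0$. Lemma \ref{Lemma_1}a) then furnishes $K_1 \in \Gamma_N$ with $K_1 g = (\alpha^*, 0, 0)'$. After replacing $K$ by $K_1 K$ (same right coset), the $(1,2)$-entry of $(K_1 K)' S_N (K_1 K) = m^2 S_N$ reads $\alpha^* (K_1 K)_{32} = 0$, so $(K_1 K)_{32} = 0$ and the matrix is upper triangular. Working out the remaining entries of the same identity forces $\alpha^* \delta^* = m^2$, the middle diagonal entry $\pm m$, and the two entries above the diagonal to take the shape in \eqref{gl_7} in terms of the free parameter $\mu := (K_1K)_{23}$. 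The characterization of the connected component in \cite{K3}, sect.~2 pins down the signs so that $\alpha^*, \delta^* \in \N$ and the middle diagonal entry equals $+m$. Finally, left-multiplication by $K_\nu$ from \eqref{gl_6} with $\nu \in \Z$ shifts $\mu \mapsto \mu + \nu \delta^*$, so a unique $\nu$ reduces $\mu$ into $\{0, 1, \ldots, \delta^* - 1\}$. For uniqueness: if $L, \widetilde{L}$ of the form \eqref{gl_7} satisfy $\widetilde{L} = K L$ with $K \in \Gamma_N$, then $K = \widetilde{L} L^{-1}$ is upper triangular with diagonal $(\widetilde{\alpha}^*/\alpha^*, 1, \widetilde{\delta}^*/\delta^*)$; integrality of both $K$ and $K^{-1}$ together with $\widetilde{\alpha}^* \widetilde{\delta}^* = m^2 = \alpha^* \delta^*$ forces $\widetilde{\alpha}^* = \alpha^*$ and $\widetilde{\delta}^* = \delta^*$, and then the $(2,3)$-entry $(\widetilde{\mu} - \mu)/\delta^*$ being integral combined with $0 \le \mu, \widetilde{\mu} < \delta^*$ yields $\widetilde{\mu} = \mu$.

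For part b), proceed analogously. From $\widehat{S}_N[M e_1] = 0$ and Lemma \ref{Lemma_1}b) we obtain $M_1 \in \widehat{\Gamma}_N$ with $M_1 M e_1 = (\alpha, 0, 0, 0, 0)'$, where $\alpha$ equals the $\gcd$ of the first column of $M$. Expanding $(M_1 M)' \widehat{S}_N (M_1 M) = m^2 \widehat{S}_N$ in the $1 + 3 + 1$ block decomposition forces the $3 \times 1$ block below the middle $3 \times 3$ entry $L$ to vanish, gives $\alpha \delta = m^2$ and $L' S_N L = m^2 S_N$ (so $\frac{1}{m} L \in \mathcal{G}_N$), and makes the identities $a = -\delta^{-1} L' S_N c$ and $\beta = -(2\delta)^{-1} S_N[c]$ automatic. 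Applying part a) via left-multiplication by $\widehat{K}$ from \eqref{gl_5} brings $L$ into the form \eqref{gl_7} (simultaneously replacing $c$ by $Kc$). Then left-multiplication by $M_\lambda$ from \eqref{gl_2} with $\lambda \in \Z^3$ replaces $c$ by $c + \lambda \delta$ (and adjusts $a, \beta$ so that the forced relations persist), so a unique $\lambda$ puts $c$ into $\{0, 1, \ldots, \delta - 1\}^3$. Uniqueness is analogous to a): writing $\widetilde{M} M^{-1} \in \widehat{\Gamma}_N$ in the $1 + 3 + 1$ block form reduces the $L$-part to uniqueness in a), and the middle-right block then pins down $c$ by a mod-$\delta$ argument, after which $a$ and $\beta$ are determined by the forced relations.

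I expect the main obstacle to be the sign/orientation bookkeeping in part a) --- making rigorous use of \cite{K3} to force $\alpha^*, \delta^* \in \N$ rather than only $\pm \N$, since Lemma \ref{Lemma_1}a) controls $\gcd$ only up to sign --- and then, in part b), keeping careful track of the coupled action of $\widehat{K}$ and $M_\lambda$ on all four parameters $L$, $c$, $a$, $\beta$ to verify both existence and uniqueness simultaneously.
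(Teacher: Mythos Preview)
Your proposal is correct and follows essentially the same route as the paper: apply Lemma~\ref{Lemma_1} to clear the first column, use the orthogonality relation (equivalently the explicit inverse formula from \cite{K3}) to obtain the upper-triangular shape, fix the signs via the connected-component characterization, and then reduce the remaining parameters with $K_{\nu}$ in \eqref{gl_6} resp.\ $\widehat{K}$ in \eqref{gl_5} and $M_{\lambda}$ in \eqref{gl_2}; your uniqueness argument by computing $\widetilde{L}L^{-1}$ is exactly the ``multiply by the inverse and read off equality'' step the paper indicates.
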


\begin{proof}
a) For the existence apply Lemma 1 and \eqref{gl_6} with an appropriate $\mu\in\Z$. As block triangular matrices form a group, we obtain the last row in the form $(0,0,\ast)$ from the explicit form of the inverse in \cite{K3}. As this matrix belongs to the connected component of $E$, we get $\alpha^{\ast},\delta^{\ast}\in\N$. Given two matrices in the right coset of such a form multiply by the inverse of one from the right. Then one successively sees that they need to be equal. \\
b) This part follows along the same lines using Lemma 1, a), \eqref{gl_5} and \eqref{gl_2} with an appropriate $\lambda \in\Z^3$. The shape of the first row is a consequence of $\widehat{S}_N[M] = m^2\widehat{S}_N$ (cf. \cite{K3}).
\end{proof}

A necessary condition that the matrices in \eqref{gl_7} and \eqref{gl_8} occur is that they are integral, which is not the case for freely chosen integral $\alpha, \alpha^{\ast},\mu,c$. Lemma \ref{lemma_2} can be helpful in order to compute canonical representatives of right cosets in a given double coset or only for its calculation.\\

A direct consequence is

\begin{corollary} %%% 1
Let $N\in\N$ be squarefree. Then $(\Gamma_N,\mathcal{G}_N)$ and $(\widehat{\Gamma}_N,\widehat{\mathcal{G}}_N)$ are Hecke pairs.
\end{corollary}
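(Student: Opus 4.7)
The plan is to verify the two conditions defining a Hecke pair: subgroup inclusion, and finiteness of the right coset decomposition of each double coset. The inclusions $\Gamma_N\subset\mathcal{G}_N$ and $\widehat{\Gamma}_N\subset\widehat{\mathcal{G}}_N$ are immediate from the definitions, since $SO_0(S_N;\Z)\subset SO_0(S_N;\Q)$ and similarly in the five-dimensional case.

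The nontrivial point is finiteness. The idea is to reduce everything to Lemma \ref{lemma_2} via a common-denominator argument. For any $g\in\mathcal{G}_N$ I would write $g=\frac{1}{m}K_0$ with $m\in\N$ and $K_0$ integral. Since $\Gamma_N$ consists of integer matrices, every element $uga$ with $u,a\in\Gamma_N$ is again of the form $\frac{1}{m}K$ with $K\in\Z^{3\times 3}$. In particular, every right coset inside $\Gamma_N g\Gamma_N$ has the form $\frac{1}{m}\Gamma_N K$ with $K$ integral and $\frac{1}{m}K\in\mathcal{G}_N$, which is exactly the setting covered by Lemma \ref{lemma_2}a).

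At this point Lemma \ref{lemma_2}a) yields a unique normal-form representative of shape \eqref{gl_7}, parametrized by a factorization $\alpha^{\ast}\delta^{\ast}=m^2$ in $\N$ and a residue $\mu\in\{0,\ldots,\delta^{\ast}-1\}$. For fixed $m$ only finitely many triples $(\alpha^{\ast},\delta^{\ast},\mu)$ are available, so $\Gamma_N g\Gamma_N$ is a finite union of right cosets, which is the claim for $(\Gamma_N,\mathcal{G}_N)$. For $(\widehat{\Gamma}_N,\widehat{\mathcal{G}}_N)$ the argument is entirely parallel using Lemma \ref{lemma_2}b): the normal form \eqref{gl_8} involves the finite additional data $\alpha\delta=m^2$ with $\alpha,\delta\in\N$ and $c\in\{0,\ldots,\delta-1\}^3$, layered on top of the already finite set of admissible inner blocks $L$. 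I do not anticipate any real obstacle; all the substantive work is already done in Lemma \ref{lemma_2}, and the corollary is a direct counting observation on top of it.
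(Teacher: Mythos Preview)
Your argument is correct and matches the paper's approach: the paper simply states the corollary as ``a direct consequence'' of Lemma~\ref{lemma_2}, and you have spelled out precisely the finiteness count that makes this immediate.
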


Next we consider double cosets.

\begin{lemma} %%% 3
Let $N\in\N$ be squarefree and $\frac{1}{m}K\in\mathcal{G}_N$ with $m\in\N$ and integral $K$. Then the double coset $\Gamma_N K \Gamma_N$ contains a unique representative of the form
\[
 \diag(\alpha^{\ast},m,\delta^{\ast}), \;\alpha^{\ast},\delta^{\ast}\in\N,\;\alpha^{\ast}\delta^{\ast}=m^2,\;\alpha^{\ast}\mid m\mid \delta^{\ast}.
\]
The double coset is uniquely determined by the Smith invariants of $K$. 
\end{lemma}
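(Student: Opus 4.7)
\medskip
\noindent\textbf{Proof plan.} My starting point will be Lemma~\ref{lemma_2}a): I would first replace $K$ by the unique form-\eqref{gl_7} representative $L$ of its right coset $\Gamma_N K$, so that $\Gamma_N K\Gamma_N=\Gamma_N L\Gamma_N$. The problem then reduces to eliminating the parameter $\mu$ in \eqref{gl_7} and strengthening $\alpha^\ast\delta^\ast=m^2$ into the Smith divisibility $\alpha^\ast\mid m\mid\delta^\ast$ using only right multiplications by elements of $\Gamma_N$.

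To clear $\mu$, I would right-multiply $L$ by $K_\nu$ from \eqref{gl_6}: the $(2,3)$-entry becomes $\mu+m\nu$, and reapplying Lemma~\ref{lemma_2}a) to $LK_\nu$ restores form \eqref{gl_7} with $\mu$ replaced by $(\mu+m\nu)\bmod\delta^\ast$, so $\mu$ gets reduced modulo $\gcd(m,\delta^\ast)$. The block $V$ appearing in \eqref{gl_3} is an involution in $SO(S_N;\Z)$, and a direct computation shows that conjugation by $V$ sends $\diag(\alpha^\ast,m,\delta^\ast)$ to $\diag(\delta^\ast,m,\alpha^\ast)$. Combining this Weyl-type swap with the $K_\nu$-shifts then also allows reduction modulo $\gcd(m,\alpha^\ast)$. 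Feeding this back into the integrality constraints $\delta^\ast\mid 2Nm\mu$ and $\delta^\ast\mid N\mu^2$ that are built into \eqref{gl_7} --- which, because $N$ is squarefree, preclude the $p^2$-type obstruction exhibited by the $N=4$ example following Lemma~\ref{Lemma_1} --- the iteration will terminate at $\mu=0$, and a prime-by-prime transfer of surplus prime powers from $\alpha^\ast$ to $\delta^\ast$, still inside $\Gamma_N$, will upgrade the diagonal to satisfy $\alpha^\ast\mid m\mid\delta^\ast$.

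Uniqueness and the Smith-invariant statement I would handle together. Since $\Gamma_N\subset GL_3(\Z)$, any two diagonal representatives of $\Gamma_N K\Gamma_N$ are two-sided $GL_3(\Z)$-equivalent and therefore share the elementary divisors of $K$. A diagonal matrix whose entries satisfy $\alpha^\ast\mid m\mid\delta^\ast$ is already in Smith normal form, so its three diagonal entries are forced to equal the elementary divisors of $K$. This pins $(\alpha^\ast,m,\delta^\ast)$ down uniquely and identifies the double coset with the Smith invariants of $K$.

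The hard part of the plan will be the divisibility $\alpha^\ast\mid m$: the bare relation $\alpha^\ast\delta^\ast=m^2$ admits non-Smith triples such as $(4,6,9)$, so one really has to exhibit an explicit $\Gamma_N$-move --- not an obvious combination of the generators \eqref{gl_3}--\eqref{gl_6} --- that transports a prime factor from $\alpha^\ast$ into $\delta^\ast$. This is precisely where squarefreeness of $N$ will be indispensable.
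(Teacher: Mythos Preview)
Your uniqueness argument via Smith invariants is correct and clean. The existence argument, however, has two problems.

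First, a concrete error: the matrix $V$ from \eqref{gl_3} has $\det V=-1$, so $V\in O(S_N;\Z)$ but $V\notin SO(S_N;\Z)$, hence $V\notin\Gamma_N$. The repair is easy --- $-V$ has determinant $1$, preserves the positive cone of $S_N$, and therefore lies in $\Gamma_N$; conjugation by $-V$ coincides with conjugation by $V$, so your swap survives with $-V$ in place of $V$.

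Second, and more seriously, the decisive step is left as a promissory note. You correctly isolate the crux --- upgrading a diagonal $\diag(\alpha^\ast,m,\delta^\ast)$ with merely $\alpha^\ast\delta^\ast=m^2$ to one satisfying $\alpha^\ast\mid m\mid\delta^\ast$ --- and you correctly observe that none of the listed generators \eqref{gl_3}--\eqref{gl_6} visibly moves a prime factor from $\alpha^\ast$ to $\delta^\ast$ on a diagonal matrix. But you then stop. The $K_\nu$-shifts, the $(-V)$-swap, and the integrality constraints $\delta^\ast\mid 2Nm\mu$, $\delta^\ast\mid N\mu^2$ may well combine to force Smith form, but the actual bookkeeping --- how $(\alpha^\ast,\delta^\ast,\mu)$ evolve under each move, why the iteration terminates, and where exactly squarefreeness of $N$ enters --- is precisely what a proof must supply, and it is absent. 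As written, your $(4,6,9)$ example remains unhandled.

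By contrast, the paper does not attempt any direct $3\times3$ manipulation. It invokes Corollary~4 of \cite{K3}, which for squarefree $N$ identifies $\Gamma_N$ with an extended Fricke-type group on the $SL_2$ side, and then quotes Theorem~4 of \cite{K2}, where the analogous double-coset classification is already proved for those groups. So the paper's strategy is to transport the problem along a known isomorphism to a solved $2\times2$ problem. Your direct orthogonal-group approach would be more self-contained if completed, but in its present form the hard step is missing.
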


\begin{proof}
The result follows from Theorem 4 in \cite{K2} in combination with Corollary 4 in \cite{K3}.
\end{proof}

Finally, we consider $(\widehat{\Gamma}_N, \widehat{\mathcal{G}}_N)$.

\begin{theorem}\label{Theorem_1} %%% (1)
Let $N\in\N$ be squarefree. Each double coset $\widehat{\Gamma}_N(\frac{1}{m}M)\widehat{\Gamma}_N$, $\frac{1}{m}M \in\mathcal{G}_N$ with $m\in\N$ and integral $M$, contains a unique representative of the form
\[
 \frac{1}{m}\diag(\alpha,\alpha^{\ast},m,\delta^{\ast},\delta),
\]
where
\[
 \alpha,\alpha^{\ast},\delta,\delta^{\ast} \in \N,\;\; \alpha \delta = \alpha^{\ast}\delta^{\ast} = m^2,\;\; \alpha\mid\alpha^{\ast}\mid m\mid \delta^{\ast}\mid \delta.
\]
The double coset is uniquely determined by the Smith invariants of $M$.
\end{theorem}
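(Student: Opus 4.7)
The strategy is to reduce $M$ stepwise to a diagonal shape using the explicit generators \eqref{gl_2}--\eqref{gl_6} of $\widehat{\Gamma}_N$, and then to read off divisibility and uniqueness from the Smith invariants over $\Z$. First I would apply Lemma \ref{lemma_2} b) to bring $M$ into the block upper-triangular shape \eqref{gl_8}, with middle block $L$ of the form \eqref{gl_7} and $\alpha\delta = m^2$. Applying Lemma 3 to $\tfrac{1}{m}L \in \mathcal{G}_N$ produces $K_1,K_2\in\Gamma_N$ with $K_1 L K_2=\diag(\alpha^{\ast},m,\delta^{\ast})$, $\alpha^{\ast}\mid m\mid \delta^{\ast}$, $\alpha^{\ast}\delta^{\ast}=m^2$; passing to the lifts $\widehat{K}_1,\widehat{K}_2\in\widehat{\Gamma}_N$ via \eqref{gl_5} preserves the block triangular shape and the outer entries $\alpha,\beta,\delta$. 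Right multiplication by $M_\lambda$ from \eqref{gl_2} changes $c\mapsto c+L\lambda$ (modifying $a,\beta$ as well), and left multiplication by a suitable $\widetilde{M}_\mu$ from \eqref{gl_2'} further adjusts $a$ and $\beta$. Using the coupling $a=-\tfrac{1}{\delta}L'S_N c$ and $\beta=-\tfrac{1}{2\delta}S_N[c]$ from Lemma \ref{lemma_2} b) together with the integrality of the resulting matrix, I expect to drive $c$ (and hence $a,\beta$) to $0$, producing a diagonal representative $\tfrac{1}{m}\diag(\alpha,\alpha^{\ast},m,\delta^{\ast},\delta)$ with $\alpha\delta=\alpha^{\ast}\delta^{\ast}=m^2$. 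If the diagonal entries are not yet in Smith order, conjugation by $J^{\ast}$ from \eqref{gl_3} and further $M_F,\widetilde{M}_F$ moves from \eqref{gl_4}, \eqref{gl_4'} rearrange them; $J^{\ast}$ in particular reverses the diagonal.

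Once such a diagonal representative has been constructed, both the full divisibility chain $\alpha\mid\alpha^{\ast}\mid m\mid \delta^{\ast}\mid \delta$ and uniqueness follow from the Smith invariants. Since $\widehat{\Gamma}_N\subset SL_5(\Z)$, the $\widehat{\Gamma}_N$-double coset is contained in a single $SL_5(\Z)$-double coset, which is classified by the ordered Smith divisors $s_1\mid\ldots\mid s_5$ of $M$; a diagonal matrix of the claimed form is already in Smith normal form, so its diagonal entries match these invariants. Consequently two such diagonals in the same double coset must coincide, and the double coset is determined by the Smith invariants of $M$.

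I expect the elimination step and enforcing the full Smith order on the outer entries to be the main obstacles. Showing that the residues of $c$ modulo the diagonal entries of $L$ must vanish rests on integrality of the reduced matrix together with the similitude relation $\widehat{S}_N[M]=m^2\widehat{S}_N$, and the squarefreeness of $N$ will enter exactly as in Lemma \ref{Lemma_1}. Making the outer entries $\alpha,\delta$ compatible with $\alpha^{\ast},\delta^{\ast}$ in the required divisibility order may require additional combinations of $J^{\ast}$-conjugations and $M_F$-type moves beyond what Lemma 3 alone provides.
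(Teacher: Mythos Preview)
Your overall framework (Lemma~\ref{lemma_2}~b) for the block shape, Lemma~3 for the inner $3\times3$ block, and Smith invariants for uniqueness) matches the paper, and your uniqueness argument is correct. The gap is the elimination step. Right multiplication by $M_\lambda$ replaces $c$ by $c+L\lambda$, so with $L=\diag(\alpha^{\ast},m,\delta^{\ast})$ you can only reduce $c_1\bmod\alpha^{\ast}$, $c_2\bmod m$, $c_3\bmod\delta^{\ast}$; neither integrality nor the relation $\widehat{S}_N[M]=m^2\widehat{S}_N$ forces these residues to vanish, and the left moves $\widetilde{M}_\mu$ act on $a$ in the analogous way. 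Your acknowledged ``main obstacle'' is a genuine one, and the $J^{\ast}$--conjugation idea does not help here: it just swaps $(\alpha,\alpha^{\ast})\leftrightarrow(\delta,\delta^{\ast})$ and cannot create the missing divisibilities.

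The paper supplies two ideas you are missing. First, a \emph{minimality} argument: take $\alpha$ to be the smallest positive $(1,1)$-entry occurring in the whole double coset. Acting with $M_F$ on the upper-left $2\times2$ corner then shows $\alpha\mid\gcd(\alpha,\alpha^{\ast},a_1)$, and a $\widetilde{M}_G$-move gives $\alpha\mid a_3$ as well; hence $\lambda=\tfrac{1}{\alpha}(a_3,0,a_1)'\in\Z^3$ is admissible and right multiplication by $M_\lambda$ kills $a_1,a_3$. The coupling $a=-\tfrac{1}{\delta}L'S_Nc$ then forces $c_1=c_3=0$ automatically. Second, at this point the nonzero off-diagonal entries live only in rows and columns $1,3,5$; deleting rows and columns $2,4$ from $\widehat{S}_N$ reproduces $S_N$, so there is a \emph{second} embedding $\Gamma_N\hookrightarrow\widehat{\Gamma}_N$ through positions $1,3,5$, and a second application of Lemma~3 via that embedding finishes the diagonalisation. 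This is what produces the full chain $\alpha\mid\alpha^{\ast}\mid m\mid\delta^{\ast}\mid\delta$, rather than trying to rearrange a diagonal after the fact.
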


\begin{proof}
Let $\alpha$ be the smallest positive $(1,1)$-entry of the matrices in $\widehat{\Gamma}_N M \widehat{\Gamma}_N$. By Lemma 2 including its notations we may assume that $M$ is upper triangular. Using Lemma 3 and the embedding \eqref{gl_5}, we may assume
\[
 L = \diag(\alpha^{\ast},m,\delta^{\ast}), \;\;\alpha^{\ast},\delta^{\ast} \in\N,\;\; \alpha^{\ast}\delta^{\ast} = m^2, \;\; \alpha^{\ast}\mid m\mid \delta^{\ast}.
\]
% If we multiply by $\widetilde{M}_{\lambda}$, $\lambda = (1,0,0)'$, in \eqref{gl_2'} from the right, we obtain a matrix with $\alpha$ and $\alpha^{\ast}$ in the first column. 
If we apply the Smith normal form to the upper left $2\times 2$ block and multiply by suitable matrices in \eqref{gl_4}, we obtain a matrix with $gcd(\alpha,\alpha^{\ast},a_1)$ as $(1,1)$-entry, if $a=(a_1,a_2,a_3)'$.
The choice of $\alpha$ and Lemma 2 lead to 
\[
 \alpha\mid \alpha^{\ast} \;\;\text{and}\;\; \alpha\mid a_1.
\]
If we multiply by a matrix $\widetilde{M}_G$, $G\in SL_2(\Z)$, in \eqref{gl_4'} from the right, we may replace $\alpha$ by $gcd(\alpha,a_3)$, thus $\alpha\mid a_3$. Multiplication by $M_{\lambda}$, $\lambda = \frac{1}{\alpha}(a_3,0,a_1)' \in\Z^3$, in \eqref{gl_2} from the right leads to 
\[
 a_1 = a_3 = c_1 = c_3 = 0.
\]
If we cancel the second and fourth row and column in $\frac{1}{m}M$, we obtain a matrix in $\Gcal_N$. Using this embedding of $\Gamma_N$ into $\widehat{\Gamma}_N$ instead of \eqref{gl_5}, Lemma 3 gives us the desired form. As the Smith invariants are unique even for all double cosets with respect to $GL_5(\Z)$, the result follows.
\end{proof}

\section{The Hecke algebra for the orthogonal groups $\Gamma_N$ and $\widehat{\Gamma}_N$}

First, we obtain a kind of multiplicativity in the Hecke algebras.

\begin{lemma}\label{Lemma_4} %%% (4)
Let $N\in\N$ be squarefree and let $l,m\in\N$ be coprime.
\begin{enumerate}[a)]
\item Given $\frac{1}{l}L,\frac{1}{m}M\in\mathcal{G}_N$ with integral $L,M$, one has 
\[
 \Gamma_N\left(\frac{1}{l}L\right) \Gamma_N \cdot \Gamma_N\left(\frac{1}{m}M\right) \Gamma_N = \Gamma_N\left(\frac{1}{lm}LM\right) \Gamma_N.
\]
\item Given $\frac{1}{l}L,\frac{1}{m}M\in\widehat{\mathcal{G}}_N$ with integral $L,M$, one has 
\[
 \widehat{\Gamma}_N\left(\frac{1}{l}L\right) \widehat{\Gamma}_N \cdot \widehat{\Gamma}_N\left(\frac{1}{m}M\right) \widehat{\Gamma}_N = \widehat{\Gamma}_N\left(\frac{1}{lm}LM\right) \widehat{\Gamma}_N.
\]
\end{enumerate}
\end{lemma}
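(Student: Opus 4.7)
The inclusion $\supseteq$ is immediate, since $\frac{1}{lm}LM=\frac{1}{l}L\cdot E\cdot\frac{1}{m}M$ already lies in the set product on the left-hand side (take both middle $\widehat{\Gamma}_N$-factors equal to $E$), and the set product is closed under multiplication by $\widehat{\Gamma}_N$ on either side. For the reverse inclusion $\subseteq$, any element of the left-hand side can be written as $\gamma_1\cdot\frac{1}{l}L\cdot\gamma\cdot\frac{1}{m}M\cdot\gamma_2$ with $\gamma_1,\gamma,\gamma_2\in\widehat{\Gamma}_N$, so it suffices to show $L\gamma M\in\widehat{\Gamma}_N(LM)\widehat{\Gamma}_N$ for every $\gamma\in\widehat{\Gamma}_N$. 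Using $\widehat{S}_N[L]=l^2\widehat{S}_N$, $\widehat{S}_N[\gamma]=\widehat{S}_N$ and $\widehat{S}_N[M]=m^2\widehat{S}_N$, a direct computation gives $\widehat{S}_N[L\gamma M]=(lm)^2\widehat{S}_N$, and $\frac{1}{lm}L\gamma M\in\widehat{\mathcal{G}}_N$ because $\widehat{\mathcal{G}}_N$ is a group. Hence Theorem \ref{Theorem_1} applies to both $\frac{1}{lm}LM$ and $\frac{1}{lm}L\gamma M$, and the claimed equality of double cosets reduces to the equality of the Smith invariants of the two integral matrices $LM$ and $L\gamma M$.

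I would verify this prime by prime. From $\widehat{S}_N[L]=l^2\widehat{S}_N$ together with $\frac{1}{l}L\in SO_0$ one gets $\det L=l^5$, and analogously $\det M=m^5$. Fix a prime $p$. The coprimality $\gcd(l,m)=1$ forces at most one of $l,m$ to be divisible by $p$, so at least one of $L,M$ lies in $GL_5(\Z_p)$. At a prime $p\mid l$, both $\gamma$ and $M$ are $p$-adic units, so right-multiplication of $L$ by $\gamma M$ (respectively by $M$) leaves the $p$-part of its Smith invariants unchanged; hence the $p$-parts of the elementary divisors of $L\gamma M$ and $LM$ both agree with those of $L$. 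Symmetrically, at $p\mid m$ they both agree with those of $M$, while at $p\nmid lm$ all three matrices lie in $GL_5(\Z_p)$ and the $p$-parts are trivial. Combining these local equalities gives $L\gamma M$ and $LM$ the same global Smith invariants, and Theorem \ref{Theorem_1} completes part b).

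Part a) follows by the same argument, with Lemma 3 in place of Theorem \ref{Theorem_1} and the dimension $5$ replaced by $3$ (so $\det L=l^3$, $\det M=m^3$). The coprimality hypothesis $\gcd(l,m)=1$ is used in exactly one place, namely the local step above, where it guarantees that at each prime we can absorb $\gamma$ into whichever of $L$, $M$ is a $p$-adic unit; that local manipulation is the crux of the argument, while the rest is a direct application of the Smith-invariant classification of double cosets established in Section 2.
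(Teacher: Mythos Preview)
Your prime-by-prime verification that $L\gamma M$ and $LM$ share the same Smith invariants is correct, and it is essentially the multiplicativity of elementary divisors for coprime factors that the paper cites from Newman. However, you have interpreted ``$\cdot$'' as the set-theoretic product of double cosets, whereas the lemma is an identity in the Hecke algebra $\Hcal(\widehat{\Gamma}_N,\widehat{\Gcal}_N)$. What your argument actually establishes is that the Hecke product is supported on the single double coset $\widehat{\Gamma}_N\tfrac{1}{lm}LM\widehat{\Gamma}_N$, i.e.\ that it equals $c\cdot\widehat{\Gamma}_N\tfrac{1}{lm}LM\widehat{\Gamma}_N$ for some $c\in\N$; the step $c=1$ is missing.

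The paper covers this separately via the ``standard argument'' (cf.\ \cite{K1}, Lemma V(6.1)). Writing $\widehat{\Gamma}_N L\widehat{\Gamma}_N=\bigsqcup_i\widehat{\Gamma}_N L_i$ and $\widehat{\Gamma}_N M\widehat{\Gamma}_N=\bigsqcup_j\widehat{\Gamma}_N M_j$, one checks that $\widehat{\Gamma}_N L_iM_j=\widehat{\Gamma}_N L_{i'}M_{j'}$ forces $(i,j)=(i',j')$. Indeed, such an equality gives $M_{j'}M_j^{-1}=L_{i'}^{-1}\gamma^{-1}L_i$ for some $\gamma\in\widehat{\Gamma}_N$; the left-hand side lies in $\widehat{\Gcal}_N$ with denominators only at primes dividing $m$, the right-hand side only at primes dividing $l$, so $\gcd(l,m)=1$ makes the common value integral and hence in $\widehat{\Gamma}_N$, yielding $j=j'$ and then $i=i'$. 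Adding this step (and its obvious analogue for part a)) completes your proof; apart from this omission your approach coincides with the paper's.
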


\begin{proof}
The double cosets are uniquely determined by the Smith invariants, which are multiplicative due to \cite{N}, Theorem II.5. Hence, only the double coset of $\frac{1}{lm}LM$ can occur in the product. The multiplicity is $1$ due to a standard argument (cf. \cite{K1}, Lemma V(6.1)).
\end{proof}

Given a prime $p$, i.e. $p\in\Pb$, we consider the subgroups
\begin{align*}
 \mathcal{G}_{N,p}: & = \left\{\frac{1}{p^r} K\in \mathcal{G}_N;\;K\;\text{integral}, \;r\in \N_0\right\},   \\
 \widehat{\mathcal{G}}_{N,p}: & = \left\{\frac{1}{p^r} M\in \widehat{\mathcal{G}}_N;\;M\;\text{integral}, \;r\in \N_0\right\}.
\end{align*}

\begin{theorem}\label{Lemma_2} %%% (2)
If $N\in\N$ is squarefree, the Hecke algebras $\Hcal(\Gamma_N,\Gcal_N)$ and $\Hcal(\widehat{\Gamma}_N,\widehat{\Gcal}_N)$ are commutative and coincide with the tensor product of their primary components
\[
 \bigotimes_p\Hcal(\Gamma_N,\Gcal_{N,p}) \quad \text{resp.} \quad \bigotimes_p\Hcal(\widehat{\Gamma}_N,\widehat{\Gcal}_{N,p}).
\]
\end{theorem}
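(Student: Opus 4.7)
The plan is to handle the two Hecke algebras in parallel, splitting the assertion into (i) commutativity and (ii) the primary (tensor) decomposition. Both Hecke pairs already carry the two structural features needed: diagonal representatives for every double coset (Lemma~3 for $\Gcal_N$, Theorem~\ref{Theorem_1} for $\widehat{\Gcal}_N$, each identifying the double coset by its Smith invariants) and multiplicativity for coprime denominators (Lemma~\ref{Lemma_4}).

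For (i) I would use the anti-automorphism $\sigma\colon M\mapsto M^{-1}$ of $\Gcal_N$ and $\widehat{\Gcal}_N$. Since $\Gamma_N$ and $\widehat{\Gamma}_N$ are groups, $\sigma$ preserves them and hence induces, via the standard construction, an anti-algebra-endomorphism of the corresponding Hecke algebras, sending $[\widehat{\Gamma}_N g\widehat{\Gamma}_N]$ to $[\widehat{\Gamma}_N g^{-1}\widehat{\Gamma}_N]$. If every double coset is $\sigma$-stable, then this anti-endomorphism is the identity, and an identity that simultaneously reverses multiplication forces $ab=ba$. To verify $\sigma$-stability I pick the diagonal representative $D=\tfrac{1}{m}\diag(\alpha,\alpha^{\ast},m,\delta^{\ast},\delta)$ supplied by Theorem~\ref{Theorem_1}. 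Then $mD^{-1}=\diag(\delta,\delta^{\ast},m,\alpha^{\ast},\alpha)$ is merely the reverse of $mD$, so has the same multiset of diagonal entries and therefore the same Smith invariants; the uniqueness clause of Theorem~\ref{Theorem_1} yields $\widehat{\Gamma}_N D^{-1}\widehat{\Gamma}_N=\widehat{\Gamma}_N D\widehat{\Gamma}_N$. The verbatim parallel argument with Lemma~3 settles $\Hcal(\Gamma_N,\Gcal_N)$.

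For (ii) I would exploit Lemma~\ref{Lemma_4}. Writing $m=\prod_p p^{m_p}$, $\alpha=\prod_p p^{\alpha_p}$, and analogously for $\alpha^{\ast},\delta^{\ast},\delta$, the divisibility chain $\alpha\mid\alpha^{\ast}\mid m\mid\delta^{\ast}\mid\delta$ and the identities $\alpha\delta=\alpha^{\ast}\delta^{\ast}=m^2$ hold prime by prime, so each local factor
\[
 D_p=\tfrac{1}{p^{m_p}}\diag(p^{\alpha_p},p^{\alpha^{\ast}_p},p^{m_p},p^{\delta^{\ast}_p},p^{\delta_p})
\]
again satisfies the hypotheses of Theorem~\ref{Theorem_1}, lies in $\widehat{\Gcal}_{N,p}$, and $D=\prod_p D_p$ with only finitely many $D_p\ne E$. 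Since the denominators of distinct $D_p$'s are pairwise coprime, Lemma~\ref{Lemma_4} collapses $\prod_p[\widehat{\Gamma}_N D_p\widehat{\Gamma}_N]$ to the single double coset $[\widehat{\Gamma}_N D\widehat{\Gamma}_N]$. The assignment $\bigotimes_p[\widehat{\Gamma}_N D_p\widehat{\Gamma}_N]\mapsto\prod_p[\widehat{\Gamma}_N D_p\widehat{\Gamma}_N]$ is therefore a well-defined $\Z$-linear bijection on natural bases, and iterated application of Lemma~\ref{Lemma_4} together with the commutativity from (i) turns it into an algebra isomorphism. Lemma~3 provides the analogous statement for $\Gamma_N$.

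The step I expect to be the real obstacle is the $\sigma$-stability of double cosets in (i). Without the Smith-invariant classification of Theorem~\ref{Theorem_1} one would be forced to exhibit by hand a matrix in $\widehat{\Gamma}_N$ that reverses the diagonal of $D$; plausible candidates such as $J^{\ast}$ from \eqref{gl_3} must be checked both for integrality and for the connected-component condition defining $\widehat{\Gamma}_N=SO_0(\widehat{S}_N;\Z)$, which is more delicate than it first appears. Routing the argument through Smith invariants bypasses this issue completely, and the remainder of the proof is formal.
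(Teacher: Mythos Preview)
Your proposal is correct and follows essentially the same route as the paper: the paper also invokes the anti-automorphism $M\mapsto M^{-1}$, uses Theorem~\ref{Theorem_1} (Smith invariants) to see that every double coset is fixed by it, and then appeals to Theorem~\ref{Theorem_1} together with Lemma~\ref{Lemma_4} for the primary decomposition. The only cosmetic difference is that for $\Gamma_N$ the paper simply cites \cite{K2}, \cite{K3} rather than repeating the parallel Lemma~3 argument you sketch.
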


\begin{proof}
For $\Gamma_N$ the result is due to \cite{K2} in combination with \cite{K3}. 
According to Theorem 1, the mapping $M\mapsto M^{-1}$ is an involution which induces the identity on each double coset. Hence, the Hecke algebra is commutative by virtue of the standard argument (cf. \cite{K1}, Corollary I(7.2)). The tensor product decomposition is a consequence of Theorem 1 and Lemma 4.
\end{proof}

It follows from \cite{K2} and \cite{K3} that
\[
 \Hcal(\Gamma_N,\Gcal_{N,p}) = \Z\left[\Gamma_N,\Gamma_N \frac{1}{p}\diag\left(1,p,p^2\right) \Gamma_N \right].
\]
Now we define 
\begin{align*}
 T_{N,1}(p): & = \widehat{\Gamma}_N \frac{1}{p}\diag\left(1,p,p,p,p^2\right)\widehat{\Gamma}_N ,  \\
 T_{N,2}(p): & = \widehat{\Gamma}_N \frac{1}{p}\diag\left(1,1,p,p^2,p^2\right)\widehat{\Gamma}_N .
\end{align*}

Representatives of the right cosets in these double cosets can be computed from Lemma \ref{Lemma_2} with $m=p$.  
One just has to check the integrality of $M$ and $p^2M^{-1}= \widehat{S}^{-1}_N M'\widehat{S}_N$ in \eqref{gl_8} and then calculates the rank of these matrices over $\Z/p\Z$ according to Theorem \ref{Theorem_1}. 
For instance, the number of right cosets in $T_{N,1}(p)$ is
\begin{alignat*}{2}
& 1+p+p^2+p^3, &\quad & \text{if}\;\; p\nmid N,  \\
& p+2p^2+p^3,  &\quad & \text{if}\;\; p\mid N,
\end{alignat*}
and the number of right cosets in $T_{N,2}(p)$ is 
\begin{alignat*}{2}
& p+p^2+p^3+p^4, &\quad & \text{if}\;\; p\nmid N,  \\
& 2p^3+2p^4,  &\quad & \text{if}\;\; p\mid N,
\end{alignat*}
in accordance with \cite{RS2}, p. 192-193. Precise representatives in the symplectic setting are given in \cite{HK}.

\begin{theorem}\label{Theorem_3} %%% (3)
Let $N\in\N$ be squarefree and $p$ a prime. The primary component $\Hcal(\widehat{\Gamma}_N,\widehat{\Gcal}_{N,p})$ consists of all polynomials in $T_{N,1}(p)$, $T_{N,2}(p)$, which are algebraically independent. $\Hcal(\widehat{\Gamma}_N,\widehat{\Gcal}_N)$ does not contain any zero-divisors.
\end{theorem}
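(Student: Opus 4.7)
The plan is to use Theorem \ref{Theorem_1} to index double cosets, then establish a triangularity identity for the products $T_{N,1}(p)^{a}T_{N,2}(p)^{b}$, and finally invoke Theorem \ref{Lemma_2} for the zero-divisor claim. By Theorem \ref{Theorem_1}, a $\Z$-basis of $\Hcal(\widehat{\Gamma}_N,\widehat{\Gcal}_{N,p})$ is given by the double cosets $T(s,t)$ with canonical representative $\frac{1}{p^t}\diag(1,p^{t-s},p^t,p^{t+s},p^{2t})$, indexed by pairs $(s,t)\in\Z^2$ satisfying $0\leq s\leq t$. Under this indexing one reads off $T_{N,1}(p)=T(0,1)$ and $T_{N,2}(p)=T(1,1)$, and multiplying the diagonals suggests that $(b,a+b)$ is the ``top'' shape of $T_{N,1}(p)^{a}T_{N,2}(p)^{b}$.

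The technical heart of the proof would be the triangularity formula
\[
 T_{N,1}(p)^{a}\,T_{N,2}(p)^{b} \;=\; T(b,a+b)+\sum_{t<a+b}c_{s,t}\,T(s,t),\qquad c_{s,t}\in\Z_{\geq 0},
\]
with leading coefficient one. I would prove this by multiplying right-coset representatives in the triangular form of Lemma \ref{lemma_2}: only the product of the two diagonal representatives attains a Smith invariant with $p$-valuation $2(a+b)$, and this single product falls in the $\widehat{\Gamma}_N$-orbit of the canonical representative of $T(b,a+b)$, while every other pair produces a strictly smaller largest Smith invariant and hence a shape with $t<a+b$. I expect this to be the main obstacle, since it requires detailed bookkeeping of the right cosets and a case split according to whether $p\nmid N$ or $p\mid N$, reflecting the two different coset counts quoted above.

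Once the triangularity is in hand, both conclusions of the first sentence follow by standard reasoning. For generation, an upward induction on $t$ shows that every $T(s,t)$ lies in $\Z[T_{N,1}(p),T_{N,2}(p)]$: the identity rearranges to $T(s,t)=T_{N,1}(p)^{t-s}T_{N,2}(p)^{s}-\sum_{t'<t}c_{s',t'}T(s',t')$, whose right-hand side lies in the subring by the inductive hypothesis. For algebraic independence, the monomials $T_{N,1}(p)^{a}T_{N,2}(p)^{b}$ have pairwise distinct leading shapes $(b,a+b)$, so they are $\Z$-linearly independent in the basis $\{T(s,t)\}$. Finally, Theorem \ref{Lemma_2} presents $\Hcal(\widehat{\Gamma}_N,\widehat{\Gcal}_N)$ as the tensor product of its primary components, each now known to be a polynomial ring in two variables; the tensor product is therefore the polynomial $\Z$-algebra in the countably many algebraically independent generators $T_{N,1}(p),T_{N,2}(p)$ for $p$ prime, which is an integral domain, giving the final sentence.
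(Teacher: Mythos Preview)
Your approach is correct and essentially matches the paper's: both use triangularity with respect to the filtration by $t$ (the paper's $r$) to obtain generation by induction and then algebraic independence. The differences are cosmetic---the paper states triangularity as a one-step identity (multiplying a level-$(r{-}1)$ double coset by $T_{N,1}(p)$ or $T_{N,2}(p)$, citing \cite{K1}, Proposition~V(8.1), for the leading coefficient~$1$) rather than your direct monomial formula, proves independence by the dimension count $\binom{r+2}{2}$ instead of distinct leading shapes, and does not need your anticipated case split $p\mid N$ versus $p\nmid N$, since the Smith-invariant argument is uniform in $p$.
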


\begin{proof}
Let $\Hcal^{(r)}_{N,p}$, $r\in \N_0$, denote the $\Z$-module spanned by all the double cosets
\[
 \widehat{\Gamma}_N\left(\frac{1}{p^r}M\right)\widehat{\Gamma}_N,\;\;\frac{1}{p^r}M\in\widehat{\Gcal}_{N,p},\;\; M\;\text{integral}.
\]
Hence, we have
\[
 \Hcal^{(0)}_{N,p} = \Z \widehat{\Gamma}_N, \;\; \Hcal^{(1)}_{N,p} = \Z\widehat{\Gamma}_N + \Z T_{N,1}(p) + \Z T_{N,2}(p)
\]
due to Theorem \ref{Theorem_1}. Now let
\[
 \widehat{\Gamma}_N\left(\frac{1}{p^r}M\right)\widehat{\Gamma}_N \in \Hcal^{(r)}_{N,p}\backslash \Hcal^{(r-1)}_{N,p}, \;\; r\geq 2.
\]
By Theorem \ref{Theorem_1} we may assume that
\[
 M = \diag (1,p^s,p^r, p^{2r-s},p^{2r}), \;\; 0\leq s\leq r.
\]
If $s\geq 1$, the $p$-rank of this matrix is $1$,
\begin{align*}
 & L: = \diag (1,p^{s-1},p^{r-1},p^{2r-2-s},p^{2r-2}),   \\
 & \widehat{\Gamma}_N \left(\frac{1}{p^{r-1}}L\right) \widehat{\Gamma}_N \in \Hcal^{(r-1)}_{N,p}.
\end{align*}
The same arguments as in \cite{K1}, Proposition V(8.1), show that
\[
 T_{N,1}(p) \cdot \widehat{\Gamma}_N\left(\frac{1}{p^{r-1}}L\right) \widehat{\Gamma}_N = \widehat{\Gamma}_N\left(\frac{1}{p^{r}}M\right) \widehat{\Gamma}_N + R,
\]
where $R\in \Hcal^{(r-1)}_{N,p}$. If $s=0$, let
\begin{align*}
  & L: = \diag (1,1,p^{r-1},p^{2r-2},p^{2r-2}),   \\
 & \widehat{\Gamma}_N \left(\frac{1}{p^{r-1}}L\right) \widehat{\Gamma}_N \in \Hcal^{(r-1)}_{N,p}.
\end{align*}
Just as above we conclude that
\[
 T_{N,2}(p)\cdot \widehat{\Gamma}_N\left(\frac{1}{p^{r-1}}L\right) \widehat{\Gamma}_N = \widehat{\Gamma}_N\left(\frac{1}{p^{r}}M\right) \widehat{\Gamma}_N + R_1 + R_2,
\]
where $R_1\in \Hcal^{(r-1)}_{N,p}$ and 
\[
 R_2 \in \sum^{r}_{s=1} \Z\widehat{\Gamma}_N \left(\frac{1}{p^r}\diag(1,p^s,p^r,p^{2r-s},p^{2r}\right) \widehat{\Gamma}_N.
\]
Hence, an induction shows that $T_{N,1}(p)$, $T_{N,2}(p)$ generate $\Hcal(\widehat{\Gamma}_N,\widehat{\Gcal}_{N,p})$. The number of double cosets in $\Hcal^{(r)}_{N,p}$ is $\binom{r+2}{2}$ due to Theorem \ref{Theorem_1}. A polynomial $T_{N,1}(p)^u\cdot T_{N,2}(p)^v$, $u,v\in \N_0$, belongs to $\Hcal^{(r)}_{N,p}$ if and only if $u+v\leq r$. As the number of these polynomials also equals $\binom{r+2}{2}$, the double cosets $T_{N,1}(p)$ and $T_{N,2}(p)$ are algebraically independent.
\end{proof}

\section{The Hecke algebra for the extended paramodular group $\Sigma^{\ast}_N$}

The symplectic group of degree $2$
\[
 \Sp_2(\R): = \{M\in \R^{4\times 4};\;J[M] = J\}, \;\; J = \begin{pmatrix}
                                                            0 & -E \\ E & 0
                                                           \end{pmatrix},
\]
acts on the Siegel half-space of degree $2$
\[
 H_2(\R):= \{Z = X+iY\in\C^{2\times 2};\; Z=Z',\,Y>0\}
\]
via
\[
 Z\mapsto M\langle Z\rangle = (AZ+B)(CZ+D)^{-1},\;\; M=\begin{pmatrix}
                                                        A & B \\ C & D
                                                       \end{pmatrix}.
\]
The orthogonal group $SO_0(\widehat{S}_N;\R)$ acts on the orthogonal half-space
\[
 \Hcal_N:= \{z=x+iy = (\tau_1,w,\tau_2)'\in\C^3;\;\mathrm{Im}\,\tau_1>0,\,S_N[y]>0\}
\]
via
\[
 z\mapsto \widetilde{M}\langle z\rangle:=\frac{1}{\widetilde{M}\{z\}} (-\tfrac{1}{2}S_N[z]b + K z+ c),
\]
where 
\[
 \widetilde{M} = 
 \begin{pmatrix}
  \alpha & a'S_N & \beta \\ b & K & c \\ \gamma & d'S_N & \delta                
 \end{pmatrix},\;\;
 \widetilde{M}\{z\} = - \tfrac{\gamma}{2} S_N[z] + d'S_N z + \delta.
\]
We consider the bijection between the complex symmetric $2\times 2$ matrices and $\C^3$
\[
 \phi_N:\begin{pmatrix}
         \alpha & \beta \\ \beta & \gamma
        \end{pmatrix}
\mapsto(\alpha,\beta,N\gamma)',
\]
which satisfies
\[
 \phi_N(H_2(\R) ) = \Hcal_N.
\]
There is an isomorphism between $\Sp_2(\R)/\{\pm E\}$ and $SO_0(\widehat{S}_N;\R)$, where $\pm M\mapsto \widetilde{M}$, given by 
\begin{gather*}\tag{9}\label{gl_9}
 \phi_N(M\langle Z\rangle) = \widetilde{M}\langle \phi_N(Z)\rangle\;\;\text{for all}\;Z\in H_2(\R)
\end{gather*}
(cf. \cite{G}, \cite{K3}). Note that \eqref{gl_2} and \eqref{gl_3} lead to
\[
 \widetilde{\begin{pmatrix}
                       E & S \\ 0 & E
                      \end{pmatrix}}
= M_{\lambda},\;\, \lambda=\phi_N(S), \;\, \widetilde{J}_N = J^*,\;\, J_N =
\begin{pmatrix}
 0 & -I^{-1} \\ I & 0
\end{pmatrix}, \;\, I = \begin{pmatrix}
			1 & 0 \\ 0 & N
			\end{pmatrix}.
\]
We obtain an explicit form of this isomorphism if we use the abbreviation
\[
 \begin{pmatrix}
  \alpha & \beta \\ \gamma & \delta
 \end{pmatrix}^{\sharp} = \begin{pmatrix}
			   \delta & -\beta \\ -\gamma & \alpha
			  \end{pmatrix}
\]
for the adjoint matrix, via $\phi_N(Z) = z$,
\[
 \det (CZ+D) = \widetilde{M}\{z\},\;\; (AZ+B)(CZ+D)^{\sharp} = \tfrac{1}{2} S_N[z]b+Kz+c,
\]
where
\begin{equation*}\label{gl_10}\tag{10}
 \begin{lcase}
 % & \det(CZ+D) = \widetilde{M}\{z\}, \;\;
 & \gamma = -(\det C)/N,\;\; d=\phi_N(C^{\sharp}D)/N,\quad\delta = \det D,   \\
 & \alpha = \det A,\;\, a = -N\phi_N(A^{\sharp}B),\;\, \beta = -N\det B,  \\
 % & (AZ+B)(CZ+D)^{\sharp} = -\tfrac{1}{2}S_N[z]b + Kz+c,   \\
 & b = \frac{-1}{N} \phi_N (AC^{\sharp}),\;\; c= \phi_N (BD^{\sharp}), \;\; K=(r,s,t),  \\
 & \phi_N(AZD^{\sharp} + BZ^{\sharp}C^{\sharp}) = \tau_1 r + ws + N\tau_2 t = Kz.
 \end{lcase}
\end{equation*}
Note that the formulas for the first row of $\widetilde{M}$ may be obtained from the last row of $(\widetilde{J_N M}) = J^{\ast} \widetilde{M}$.

Let $\Sigma_N$ denote the (rational) paramodular group given by all the matrices in $\Sp_2(\Q)$ of the form
\begin{gather*}\label{gl_11}\tag{11}
 \begin{pmatrix}
   a_1 & a_2 N & b_1 & b_2 \\
   a_3 & a_4 & b_3 &  b_4/N  \\
   c_1 & c_2 N & d_1 & d_2  \\
   c_3 N & c_4 N & d_3 N & d_4
 \end{pmatrix},
\quad \text{where} \quad a_i,b_i,c_i,d_i \in \Z.
\end{gather*}
Note that for $M\in\Sigma_N$ the matrices
\begin{gather*}\label{gl_12neu}\tag{12}
 \begin{pmatrix}
  a_1 & b_1 \\ c_1 & d_1
 \end{pmatrix} \bmod{N}, \quad 
 \begin{pmatrix}
  a_4 & b_4 \\ c_4 & d_4
 \end{pmatrix} \bmod{N} \;\;\text{belong to}\;\; SL_2(\Z/N\Z).
\end{gather*}
The extended paramodular group $\Sigma^{\ast}_N$ is generated by $\Sigma_N$ and $W_d$, $d\mid N$, where
\begin{gather*}\label{gl_13neu}\tag{13}
 W_d = \begin{pmatrix}
        V_d & 0 \\ 0 & V_d'^{-1}
       \end{pmatrix},\,
 V_d = \frac{1}{\sqrt{d}} \begin{pmatrix}
                           \alpha d & \beta N \\ \gamma & \delta d
                          \end{pmatrix}, \,
 \alpha,\beta,\gamma,\delta \in \Z, \, \det V_d = 1.
\end{gather*}
It is a maximal discrete normal extension of $\Sigma_N$ (cf. \cite{Koe}).
Thus, we obtain

\begin{lemma}\label{Lemma_5} %%% (5)
Let $N\in \N$ be squarefree. Then an isomorphism
\[
  \Sp_2(\R)/\{\pm E\} \to SO_0(\widehat{S}_N;\R),\quad \pm M \mapsto \widetilde{M},
\]
satisfying \eqref{gl_9} is given by \eqref{gl_10}. This isomorphism maps 
\begin{align*}
& \mathbb{G}: = \left\{\tfrac{1}{\sqrt{m}} M\in\Sp_2(\R);\; M\;\text{integral}, m\in\N\right\} \;\;\text{onto}\;\; SO_0(\widehat{S}_N;\Q),   \\[1ex]
& \Sigma^{\ast}_N \;\;\text{onto}\; \widehat{\Gamma}_N,  \\[1ex]
& \Sigma_N \;\text{onto}\; \{M = (m_{ij})\in\widehat{\Gamma}_N;\;m_{33} \equiv 1 \bmod{2N}\},\\
& \text{which is the discriminant kernel in}\; \widehat{\Gamma}_N.
\end{align*}
\end{lemma}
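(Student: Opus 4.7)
The existence of an isomorphism $\Sp_2(\R)/\{\pm E\}\to SO_0(\widehat{S}_N;\R)$ satisfying \eqref{gl_9} is already known from \cite{G} and \cite{K3}. To verify that \eqref{gl_10} gives its explicit form, I plan to expand
\[
(AZ+B)(CZ+D)^{\sharp} = AZD^{\sharp} + BZ^{\sharp}C^{\sharp} + BD^{\sharp} + (\det Z)\,AC^{\sharp}
\]
using $(PQ)^{\sharp}=Q^{\sharp}P^{\sharp}$, $ZZ^{\sharp}=(\det Z)E$, and the identity $\det Z = S_N[z]/(2N)$; comparing the constant, linear and quadratic parts in $z$ against $-\tfrac{1}{2} S_N[z]b+Kz+c$, and the denominator $\det(CZ+D)$ against $-\tfrac{\gamma}{2}S_N[z]+d'S_N z+\delta$, reads off $c,K,b,d,\delta,\gamma$. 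The remaining first-row entries $\alpha,a,\beta$ then follow from the identity $\widetilde{J_N M}=J^{\ast}\widetilde{M}$, since $\widetilde{J_N}=J^{\ast}$.

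For $\mathbb{G}\to SO_0(\widehat{S}_N;\Q)$ the forward inclusion is immediate from \eqref{gl_10}, since every entry is a quadratic monomial in the entries of $M$ up to a factor $N^{\pm 1}$; hence if $\sqrt{m}\,M$ is integral, then $m\widetilde{M}$ is integral, making $\widetilde{M}$ rational. For the converse I would invert \eqref{gl_10}, expressing entries of $M$ as square roots of suitable minors of $\widetilde{M}$, and conclude by a standard bounded-denominator argument that any preimage in $\Sp_2(\R)/\{\pm E\}$ of a matrix in $SO_0(\widehat{S}_N;\Q)$ lies in $\mathbb{G}$.

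The core of the lemma is the identification $\Sigma^{\ast}_N \leftrightarrow \widehat{\Gamma}_N$. First, to prove $\Sigma_N\hookrightarrow \widehat{\Gamma}_N$ I substitute the paramodular form \eqref{gl_11} into \eqref{gl_10} and verify integrality entry by entry: the $N$-divisibility pattern in \eqref{gl_11} combines with the weights in $\phi_N$, the explicit factor $N$ in $a,d$, and the factor $1/N$ in $b$, to produce integer entries throughout. The generators $W_d$ from \eqref{gl_13neu} are handled analogously, the factor $\sqrt{d}$ cancelling because \eqref{gl_10} is quadratically homogeneous in the entries of $M$. Surjectivity then follows from the maximality of $\Sigma^{\ast}_N$ as a discrete normal extension of $\Sigma_N$ in $\Sp_2(\R)$ (\cite{Koe}): the preimage of $\widehat{\Gamma}_N$ under the isomorphism is discrete in $\Sp_2(\R)/\{\pm E\}$ and contains $\Sigma^{\ast}_N$, hence coincides with it.

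For the discriminant kernel statement, $m_{33}$ is the centre of the block $K$, i.e.\ the middle coordinate of the coefficient of $w$ in $\phi_N(AZD^{\sharp}+BZ^{\sharp}C^{\sharp})$. A short calculation using the symplectic relation $AD'-BC'=E$ shows that for $M\in\Sigma_N$ one obtains $m_{33}=1+2N(b_2c_2-a_2d_2)\equiv 1\pmod{2N}$. For the extra generators $W_d$ with $d>1$ the same computation (with $B=C=0$) yields $m_{33}=1+2N\beta\gamma/d$, and $\det V_d=1$ forces $N\beta\gamma/d\equiv -1\pmod{d}$, so $m_{33}\not\equiv 1\pmod{2N}$. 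Hence the image of $\Sigma_N$ is precisely the discriminant kernel in $\widehat{\Gamma}_N$. The principal obstacle throughout will be the careful bookkeeping of the $N$-factors across \eqref{gl_10}, \eqref{gl_11}, \eqref{gl_13neu} and the map $\phi_N$, but each individual verification is routine.
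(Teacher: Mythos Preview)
Your approach is more self-contained than the paper's, which is largely a string of citations: the authors refer to \cite{K3}, Corollary~6, and \cite{GH2} for the identifications $\Sigma^{\ast}_N\leftrightarrow\widehat{\Gamma}_N$ and $\Sigma_N\leftrightarrow\{\text{discriminant kernel}\}$, and for the surjectivity of $\mathbb{G}\to SO_0(\widehat{S}_N;\Q)$ they invoke an explicit set of generators of $SO_0(\widehat{S}_N;\Q)$ (namely $M_\lambda$, $J^{\ast}$, $\diag(\alpha,1,1,1,1/\alpha)$, $\widehat{K}$) supplied by \cite{K3}, Theorem~4, and check that each lies in the image. Your direct computations of $m_{33}$ for $\Sigma_N$ and for $W_d$ are correct and make the discriminant-kernel statement pleasantly explicit, whereas the paper hides this behind the references.

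There are, however, two soft spots. First, your surjectivity argument for $\mathbb{G}$ via ``inverting \eqref{gl_10}'' and a ``bounded-denominator argument'' is not yet a proof: the entries of $M$ are not recoverable as square roots of single entries of $\widetilde{M}$, and making this precise is awkward. The paper's generator approach is both cleaner and shorter here. Second, your maximality argument for $\Sigma^{\ast}_N\twoheadrightarrow\widehat{\Gamma}_N$ appeals to \cite{Koe}, but the paper only records that $\Sigma^{\ast}_N$ is a maximal discrete \emph{normal} extension of $\Sigma_N$; to use this you must first know that $\Sigma_N$ is normal in the full preimage of $\widehat{\Gamma}_N$, which you have not established (your discriminant-kernel computation only shows the image of $\Sigma_N$ lies in a normal subgroup of $\widehat{\Gamma}_N$, not that $\Sigma_N$ itself is normal in the preimage). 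A cleaner route, once you have your $m_{33}$ computations, is an index count: $[\Sigma^{\ast}_N:\Sigma_N]=2^{\omega(N)}$, the $W_d$ map to pairwise distinct cosets of the discriminant kernel, and $[\widehat{\Gamma}_N:\text{kernel}]=2^{\omega(N)}$; this forces both surjectivities simultaneously without invoking maximality.
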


\begin{proof}
Apply \cite{K3}, Corollary 6, and \cite{GH2} for the last two groups. Clearly, the image of $\mathbb{G}$ is contained in $SO_0(\widehat{S}_N;\Q)$. As the latter group is generated by 
\[
 M_{\lambda}, \; \lambda \in \Q^3, \;\, J^{\ast},\;\, \diag(\alpha,1,1,1,1/\alpha),\,\alpha\in\Q,\;\alpha>0,\;\,\widehat{K},\; K\in SO_0(S_N;\Q),
\]
the surjectivity follows from \cite{K3}, Theorem 4.
\end{proof}

As the groups are isomorphic, this is true for the attached Hecke algebras as well (cf. \cite{K1}). Note that for $u,v \in \N$ 
\begin{gather*}\label{gl_14neu}\tag{14}
 M = \pm \tfrac{1}{\sqrt{u^2 v}}\diag(1,u,u^2 v,uv) \Rightarrow \widetilde{M} = \tfrac{1}{uv} \diag(1,v,uv,u^2v,u^2v^2).
\end{gather*}
% \begin{align*}
%  \pm \diag\left(\widetilde{\tfrac{1}{\sqrt{p}},\tfrac{1}{\sqrt{p}},\sqrt{p},\sqrt{p}}\right) & = \diag\left(\tfrac{1}{p},1,1,1,p\right),  \\
%  \pm \diag\widetilde{\left(\tfrac{1}{p},1,p,1\right)} & = \diag\left(\tfrac{1}{p},\tfrac{1}{p},1,p,p\right), \;\; p\in \Pb,  \\
%  \pm\diag\widetilde{\left(\tfrac{1}{uv},\tfrac{1}{v},uv,v\right)} & = \diag\left(\tfrac{1}{uv^2},\tfrac{1}{u},1,u,uv^2\right), \;\;u,v\in\N.
% \end{align*}
Thus, we can reformate Theorem \ref{Theorem_1} and Lemma \ref{Lemma_5} as 

\begin{lemma}\label{Lemma_6} %%% (6)
Let $N\in \N$ be squarefree. \\[1ex]
a) Given $M\in\G$, then the double coset $\Sigma^*_N M\Sigma^*_N$ contains a unique representative of the form
\[
 \tfrac{1}{\sqrt{u^2v}} \diag(1,u,u^2 v,uv), \;\; u,v\in\N.
\]
b) Given $L,M\in\G$ such that the common denominators of $\widetilde{L}$ and $\widetilde{M}$ are coprime, then
\[
 \Sigma^*_N L\Sigma^*_N \cdot \Sigma^*_N M \Sigma^*_N = \Sigma^*_N(LM)\Sigma^*_N.
\]
\end{lemma}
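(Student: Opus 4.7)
My plan is to transport Theorem \ref{Theorem_1} and Lemma \ref{Lemma_4} b) from the orthogonal to the paramodular side through the isomorphism $\pm M\mapsto\widetilde{M}$ of Lemma \ref{Lemma_5}, which maps $\G$ onto $\widehat{\Gcal}_N$ and $\Sigma^*_N$ onto $\widehat{\Gamma}_N$, hence induces a bijection of double cosets and a ring isomorphism of the associated Hecke algebras.

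For part a), I would write $M=\tfrac{1}{\sqrt{m'}}M_0$ with $M_0$ integral. Because the formulas \eqref{gl_10} are quadratic in the entries of the symplectic matrix, one has $\widetilde{M}=\tfrac{1}{m'}\widetilde{M_0}\in\widehat{\Gcal}_N$. Applying Theorem \ref{Theorem_1} to the double coset $\widehat{\Gamma}_N\widetilde{M}\widehat{\Gamma}_N$ then yields a unique representative
\[
\tfrac{1}{m}\diag(\alpha,\alpha^*,m,\delta^*,\delta),\qquad \alpha\delta=\alpha^*\delta^*=m^2,\quad \alpha\mid\alpha^*\mid m\mid\delta^*\mid\delta.
\]
Since $\alpha$ divides every diagonal entry, I would set $u:=m/\alpha^*$, $v:=\alpha^*/\alpha$ and use the divisibility chain together with $\alpha\delta=\alpha^*\delta^*=m^2$ to rewrite the representative in the form $\tfrac{1}{uv}\diag(1,v,uv,u^2v,u^2v^2)$. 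By \eqref{gl_14neu} this is precisely the image of $\pm\tfrac{1}{\sqrt{u^2v}}\diag(1,u,u^2v,uv)\in\G$, and the sign ambiguity is absorbed by $-E\in\Sigma_N\subseteq\Sigma^*_N$. For uniqueness I would note that the diagonal entries of $\tfrac{1}{\sqrt{u^2v}}\diag(1,u,u^2v,uv)$ recover $(u,v)$, so uniqueness of the orthogonal representative in Theorem \ref{Theorem_1} transfers to the paramodular side.

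For part b), the hypothesis that the common denominators of $\widetilde{L}$ and $\widetilde{M}$ are coprime is precisely the coprimality hypothesis of Lemma \ref{Lemma_4} b), which yields
\[
\widehat{\Gamma}_N\widetilde{L}\widehat{\Gamma}_N\cdot\widehat{\Gamma}_N\widetilde{M}\widehat{\Gamma}_N = \widehat{\Gamma}_N\widetilde{L}\widetilde{M}\widehat{\Gamma}_N = \widehat{\Gamma}_N\widetilde{LM}\widehat{\Gamma}_N,
\]
and pulling this identity back through the Hecke algebra isomorphism of Lemma \ref{Lemma_5} gives the claim.

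The only real bookkeeping concern is matching the quadratic scaling $\tfrac{1}{\sqrt{m'}}\mapsto\tfrac{1}{m'}$ between $\Sp_2$ and $SO_0(\widehat{S}_N)$ and keeping track of the sign ambiguity inherent in the isomorphism. Once these points are handled, the rescaling identity in a) is a direct consequence of the divisibility chain in Theorem \ref{Theorem_1}, and both parts reduce essentially to the corresponding orthogonal statements.
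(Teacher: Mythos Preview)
Your proposal is correct and follows exactly the paper's approach: the paper presents Lemma \ref{Lemma_6} as a direct reformulation of Theorem \ref{Theorem_1} and Lemma \ref{Lemma_4} b) via the isomorphism of Lemma \ref{Lemma_5} together with \eqref{gl_14neu}, without spelling out further details. Your substitution $u=m/\alpha^{\ast}$, $v=\alpha^{\ast}/\alpha$ and the handling of the quadratic scaling and sign simply make explicit the bookkeeping the paper leaves to the reader.
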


If we define the cases $uv=p$, $p\in \Pb$, in \eqref{gl_14neu} as
\begin{gather*}\label{gl_15neu}\tag{15}
\begin{split}
 \Tcal^{\ast}_{N,1} (p) : & = \Sigma^{\ast}_N \tfrac{1}{\sqrt{p}}\diag\left(1,1,p,p\right)\Sigma^{\ast}_N,  \\
 \Tcal^{\ast}_{N,2} (p) : & = \Sigma^{\ast}_N \tfrac{1}{p}\diag\left(1,p,p^2,p\right)\Sigma^{\ast}_N,
 \end{split}
\end{gather*}
the result of Theorem \ref{Theorem_3} can be reformulated as

\begin{theorem}\label{Theorem_4} %%% (4)
Let $N\in\N$ be squarefree. Then the Hecke algebra $\Hcal(\Sigma^{\ast}_N,\mathbb{G})$ is commutative, does not contain any zero-divisors, and coincides with the tensor product of its primary components, which are the polynomials over $\Z$ in the algebraically independent elements $\Tcal^{\ast}_{N,1}(p)$ and $\Tcal^{\ast}_{N,2} (p)$, $p\in\Pb$.
\end{theorem}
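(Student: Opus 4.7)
The plan is to transport Theorem \ref{Theorem_3} (together with the commutativity and tensor--product statements of Theorem \ref{Lemma_2}) along the group isomorphism of Lemma \ref{Lemma_5}. Since that isomorphism identifies the Hecke pair $(\Sigma^{\ast}_N,\mathbb{G})$ with $(\widehat{\Gamma}_N,\widehat{\Gcal}_N)$ -- after passing to the quotient by $\{\pm E\}\subset \Sigma^{\ast}_N$, which has no effect on double cosets since $-E\in\Sigma_N$ -- it induces a $\Z$-algebra isomorphism
\[
 \Phi\colon \Hcal(\Sigma^{\ast}_N,\mathbb{G}) \longrightarrow \Hcal(\widehat{\Gamma}_N,\widehat{\Gcal}_N),\qquad \Sigma^{\ast}_N M \Sigma^{\ast}_N \mapsto \widehat{\Gamma}_N \widetilde{M} \widehat{\Gamma}_N,
\]
by the standard principle (cf.\ \cite{K1}) that an isomorphism of Hecke pairs carries over to their Hecke algebras over $\Z$.

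Next I would identify the images of the symplectic generators under $\Phi$ by direct application of \eqref{gl_14neu}. With $(u,v)=(1,p)$ it gives $\Phi(\Tcal^{\ast}_{N,1}(p)) = T_{N,1}(p)$, and with $(u,v)=(p,1)$ it gives $\Phi(\Tcal^{\ast}_{N,2}(p)) = T_{N,2}(p)$. Introducing the symplectic primary subset
\[
 \mathbb{G}_p := \bigl\{ \tfrac{1}{\sqrt{p^r}} M \in \mathbb{G} : r\in\N_0 \bigr\},
\]
the same formula together with Lemma \ref{Lemma_6}(a) shows that $\Phi$ restricts to an isomorphism $\Hcal(\Sigma^{\ast}_N,\mathbb{G}_p)\to \Hcal(\widehat{\Gamma}_N,\widehat{\Gcal}_{N,p})$: the orthogonal denominator $uv$ of $\widetilde{M}$ is a $p$-power if and only if both $u$ and $v$ are $p$-powers, if and only if the symplectic representative $\tfrac{1}{\sqrt{u^2v}}\diag(1,u,u^2v,uv)$ lies in $\mathbb{G}_p$.

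With $\Phi$ and this primary compatibility in hand, every assertion of the theorem transfers from the orthogonal side: commutativity, absence of zero-divisors, and the tensor-product decomposition $\bigotimes_p \Hcal(\Sigma^{\ast}_N,\mathbb{G}_p)$ come from Theorem \ref{Lemma_2}, while the polynomial-ring structure in the algebraically independent generators $\Tcal^{\ast}_{N,1}(p)$, $\Tcal^{\ast}_{N,2}(p)$ comes from Theorem \ref{Theorem_3}. No genuine obstacle arises beyond the elementary bookkeeping in the $p$-primary identification above; the entire structural work has already been carried out on the orthogonal side in Theorems \ref{Theorem_1}--\ref{Theorem_3}, and Lemma \ref{Lemma_5} together with \eqref{gl_14neu} is precisely the dictionary needed to rewrite it in paramodular language.
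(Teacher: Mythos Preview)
Your proposal is correct and is exactly the paper's intended argument: the paper does not give a separate proof of Theorem~\ref{Theorem_4} but simply states that it is the reformulation of Theorem~\ref{Theorem_3} via the isomorphism of Lemma~\ref{Lemma_5} (noting just before \eqref{gl_14neu} that ``as the groups are isomorphic, this is true for the attached Hecke algebras as well''), and \eqref{gl_14neu} is precisely the dictionary matching $\Tcal^{\ast}_{N,i}(p)$ with $T_{N,i}(p)$. Your write-up merely makes explicit the bookkeeping (the harmless quotient by $\pm E$ and the primary-component matching via Lemma~\ref{Lemma_6}(a)) that the paper leaves implicit.
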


\section{The Hecke algebra for the paramodular group $\Sigma_N$}

As an application we consider the Hecke algebra with respect to $\Sigma_N$. Note that 
\[
 \Sigma_N W_d \Sigma_N = \Sigma_N W_d = W_d \Sigma_N \;\; \text{for all}\;\; d\mid N,
\]
\begin{gather*}\label{gl_16neu}\tag{16}
 \Sigma_N W_d \Sigma_N \cdot \Sigma_N W_e\Sigma_N = \Sigma_N W_f \Sigma_N, \quad 
 f=\frac{de}{gcd(d,e)^2},\;\; d\mid N,\;\; e\mid N.
\end{gather*}
Hence, we have for all $M\in\mathbb{G}$
\begin{gather*}\label{gl_17neu}\tag{17}
 \Sigma^{\ast}_N M \Sigma^{\ast}_N = \bigcup_{d\mid N,\,e\mid N}\Sigma_N W_d M W_e \Sigma_N,
\end{gather*}
\begin{gather*}\label{gl_18neu}\tag{18} 
 \Sigma_N W_d M W_e \Sigma_N  = \Sigma_N W_d \Sigma_N \cdot \Sigma_N M \Sigma_N \cdot \Sigma_N W_e \Sigma_N.
\end{gather*}
If we denote the double cosets with respect to $\Sigma_N$ instead of $\Sigma^{\ast}_N$ in \eqref{gl_15neu} by $\Tcal_{N,1}(p)$ resp. $\Tcal_{N,2}(p)$, the result is

\begin{corollary}\label{Corollary_2} %%% (2)
Let $N\in \N$ be squarefree. Then the Hecke algebra $\Hcal(\Sigma_N,\mathbb{G})$ is generated by the double cosets
\[
 \Sigma_N,\;\; \Sigma_N W_p\Sigma_N\; (p\in\Pb,\; p\mid N), \;\; \Tcal_{N,1}(q),\;\; \Tcal_{N,2}(q)\; (q\in \Pb).
\]
If $N>1$, then $\Hcal(\Sigma_N,\mathbb{G})$ fails to be commutative and contains zero-divisors.
\end{corollary}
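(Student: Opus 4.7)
My plan is to derive the corollary from Theorem~\ref{Theorem_4} together with the relations \eqref{gl_16neu}--\eqref{gl_18neu} connecting the $\Sigma_N$- and $\Sigma^*_N$-double cosets, exploiting that every $W_d$ normalizes $\Sigma_N$ (as $\Sigma_N \triangleleft \Sigma^*_N$). This normalization forces $\Sigma_N W_d\Sigma_N=\Sigma_N W_d$ to be a single $\Sigma_N$-coset, so the Hecke products $(\Sigma_N W_d\Sigma_N)\cdot(\Sigma_N M\Sigma_N)=\Sigma_N W_dM\Sigma_N$ and $(\Sigma_N M\Sigma_N)\cdot(\Sigma_N W_d\Sigma_N)=\Sigma_N MW_d\Sigma_N$ are each single double cosets with multiplicity one. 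For any $M\in\mathbb{G}$, Lemma~\ref{Lemma_6}(a) provides a canonical diagonal representative $M_0=\frac{1}{\sqrt{u^2v}}\diag(1,u,u^2v,uv)$ of $\Sigma^*_NM\Sigma^*_N$; the partition $\Sigma^*_N=\bigsqcup_{d\mid N}\Sigma_N W_d$ then writes $\Sigma_N M\Sigma_N=\Sigma_N W_{d_1}M_0W_{d_2}\Sigma_N$, which by \eqref{gl_18neu} splits in $\Hcal(\Sigma_N,\mathbb{G})$ as a product of three double cosets, and \eqref{gl_16neu} further reduces the two $\Sigma_N W_{d_i}\Sigma_N$ factors to monomials in $\Sigma_N W_p\Sigma_N$ for primes $p\mid N$.

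It then remains to show each canonical $\Sigma_N M_0\Sigma_N$ is a polynomial in the $\Tcal_{N,i}(q)$, which I would handle by induction on the complexity of $(u,v)$, exactly as in the proof of Theorem~\ref{Theorem_3}: the product $\Tcal_{N,i}(q)\cdot\Sigma_N M_0'\Sigma_N$, for $M_0'$ canonical of lower complexity and a suitably chosen prime $q\mid uv$, yields $\Sigma_N M_0\Sigma_N$ as the leading term plus lower-complexity residues lying in the inductive subalgebra. This is the direct $\Sigma_N$-analog of the orthogonal calculation performed in Theorem~\ref{Theorem_3}, transported via the isomorphism of Lemma~\ref{Lemma_5}.

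For the zero-divisor claim (given $N>1$), pick any prime $p\mid N$. Specializing \eqref{gl_16neu} to $d=e=p$ gives $f=1$, so $(\Sigma_N W_p\Sigma_N)^2=\Sigma_N$ in the Hecke algebra. Since $W_p\notin\Sigma_N$, the basis elements $\Sigma_N$ and $\Sigma_N W_p\Sigma_N$ are $\Z$-linearly independent, and
\[
 (\Sigma_N W_p\Sigma_N-\Sigma_N)(\Sigma_N W_p\Sigma_N+\Sigma_N)=(\Sigma_N W_p\Sigma_N)^2-\Sigma_N=0
\]
displays two nonzero elements with zero product.

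For non-commutativity I plan to exhibit a pair of generators that fail to commute. With $p\mid N$ prime and a suitably chosen prime $q$, consider $\Sigma_N W_p\Sigma_N$ and $\Tcal_{N,2}(q)$. By the argument above the one-sided products are the single double cosets $\Sigma_N W_pM_2\Sigma_N$ and $\Sigma_N M_2W_p\Sigma_N$, where $M_2=\frac{1}{q}\diag(1,q,q^2,q)$, and these coincide if and only if $W_pM_2W_p^{-1}\in\Sigma_N M_2\Sigma_N$. The block-diagonal shape of $W_p$ makes $W_pM_2W_p^{-1}$ a permuted-diagonal matrix that I can write down explicitly; what then remains is to verify that this permuted matrix is not $\Sigma_N$-equivalent to $M_2$. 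The cleanest route is via Lemma~\ref{Lemma_5}: transcribe the data into $\widehat{\Gcal}_N$ and detect a $\Sigma_N$-invariant stemming from the discriminant kernel characterization $\{m_{33}\equiv1\bmod2N\}$ of $\Sigma_N$ inside $\widehat{\Gamma}_N$ that distinguishes the two $\Sigma_N$-double cosets inside their common $\Sigma^*_N$-enlargement $\Tcal^*_{N,2}(q)$. Finding this distinguishing invariant is the main obstacle, and is the degree-$2$ analog of the Fricke-group computation carried out in \cite{K2}.
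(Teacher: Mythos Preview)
Your treatment of generation and of zero-divisors matches the paper: the paper also reads generation off Theorem~\ref{Theorem_4} together with \eqref{gl_16neu}--\eqref{gl_18neu}, and the factorization $(\Sigma_N W_p\Sigma_N-\Sigma_N)(\Sigma_N W_p\Sigma_N+\Sigma_N)=0$ is exactly the paper's argument (it uses $W_N$ rather than $W_p$, which is immaterial).

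For non-commutativity, though, you have left yourself the hard part, and the paper avoids it entirely. Two points. First, your ``suitably chosen prime $q$'' cannot be arbitrary: if $q\neq p$ then $\Sigma_N W_p\Sigma_N$ and $\Tcal_{N,2}(q)$ in fact \emph{commute} (this is precisely Lemma~\ref{Lemma_7}, proved just after the corollary), so you must take $q=p$ and should say so. Second, the paper does not hunt for a $\Sigma_N$-invariant at all. It argues by contradiction with a single matrix: assume commutativity; since $(\Sigma_N W_N\Sigma_N)^2=\Sigma_N$, conjugation gives
\[
 \Sigma_N W_N M W_N\Sigma_N
 =\Sigma_N W_N\Sigma_N\cdot\Sigma_N M\Sigma_N\cdot\Sigma_N W_N\Sigma_N
 =\Sigma_N M\Sigma_N
\]
for every $M$. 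Now take $M=\tfrac{1}{N}\diag(1,N,N^2,N)$ and the explicit $V_N=\tfrac{1}{\sqrt N}\left(\begin{smallmatrix}0&N\\-1&0\end{smallmatrix}\right)$. The double coset equality means $W_NMW_N\,L\,M^{-1}\in\Sigma_N$ for some $L\in\Sigma_N$; writing this product out forces $L\in\Z^{4\times4}$ with second column in $N\Z^4$, contradicting $\det L=1$. This is a two-line elementary check, needing no transport to the orthogonal side and no discriminant-kernel invariant. Your proposed route (essentially the computation carried out later in the proof of Theorem~\ref{Theorem_5} via Lemma~\ref{Lemma_8}) can be made to work with $q=p$, but it is considerably longer than what is required here.
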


\begin{proof}
Assume that the Hecke algebra is commutative and that $N> 1$. We may choose
\[
 V_N = \frac{1}{\sqrt{N}}\begin{pmatrix}
                          0 & N \\ -1 & 0
                         \end{pmatrix}.
\]
In view of $(\Sigma_N W_N \Sigma_N)^ 2 = \Sigma_N$ in \eqref{gl_16neu} this yields
\[
 \Sigma_N W_N M W_N \Sigma_N = \Sigma_N M \Sigma_N,
\]
i.e.
\begin{gather*}\label{gl_ast}
W_N M W_N L M^ {-1} \in \Sigma_N   \tag{\text{$\ast$}}
\end{gather*}
for some $L\in \Sigma_N$. If we set
\[
 M = \frac{1}{N} \diag(1,N,N^2,N)
\]
then \eqref{gl_ast} yields $L\in\Z^{4\times 4}$ and the second column belongs to $N\Z^4$. This contradicts $\det L = 1$. 

Moreover, note that \eqref{gl_16neu} yields
\[
 (\Sigma_N W_N\Sigma_N-\Sigma_N)\cdot(\Sigma_N W_N \Sigma_N+\Sigma_N) = 0.
\]
\vspace{-8ex}\\
\end{proof}

The result on the non-commutativity agrees with the fact that $T^{\ast}_{1,0} \neq T_{1,0}$, so that $T_{1,0}$ is not self-dual in \cite{RS2}, p. 194.

\bigskip
It follows from Lemma \ref{Lemma_6} and \eqref{gl_17neu} that each double coset $\Sigma_N M \Sigma_N$, $M\in \G$, contains a representative in block diagonal form
\begin{gather*}\tag{19}\label{gl_19neu}
\begin{pmatrix}
 A & 0 \\ 0 & D
\end{pmatrix}, \quad AD' = E.
\end{gather*}

If we denote by $\nu(M)$ the minimal $m\in\N$ such that $\sqrt{m}M$ is of the form \eqref{gl_11}, then \eqref{gl_16neu}, \eqref{gl_18neu}, \eqref{gl_19neu} and Corollary 3 in \cite{K2} applied to the $D$-block yield

\begin{lemma}\label{Lemma_7} %%% (7)
Let $N\in\N$ be squarefree and $d\in\N$, $d\mid N$. Then the double coset $\Sigma_N W_d \Sigma_N$ commutes with the double cosets
\[
 \Sigma_N W_e \Sigma_N,\; e\mid N, \; \Sigma_N \tfrac{1}{\sqrt{m}}\diag(1,1,m,m)\Sigma_N, \;m\in\N,\;\Sigma_N M \Sigma_N,
\]
where $M\in\G$ such that $d$ and $\nu(M)$ are coprime.
\end{lemma}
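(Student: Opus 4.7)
The plan is to reduce each commutativity claim to a single double-coset identity and then dispatch the three cases. Because $\Sigma_N$ is normal in $\Sigma^{\ast}_N$, the double coset $\Sigma_N W_d = W_d \Sigma_N = \Sigma_N W_d \Sigma_N$ is simultaneously a single left and a single right coset, so for any $X \in \mathbb{G}$
\[
 \Sigma_N W_d \Sigma_N \cdot \Sigma_N X \Sigma_N = \Sigma_N W_d X \Sigma_N, \qquad \Sigma_N X \Sigma_N \cdot \Sigma_N W_d \Sigma_N = \Sigma_N X W_d \Sigma_N,
\]
and commutativity of the two double cosets is equivalent to the single identity $\Sigma_N W_d X \Sigma_N = \Sigma_N X W_d \Sigma_N$.

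For $X = W_e$, $e \mid N$, this follows immediately from \eqref{gl_16neu}, since the exponent $f = de/\gcd(d,e)^2$ is symmetric in $d$ and $e$. For $X = \tfrac{1}{\sqrt m}\diag(1,1,m,m) = \tfrac{1}{\sqrt m}\diag(E,mE)$, the block form $W_d = \diag(V_d, V_d'^{-1})$ makes $W_d$ and $X$ commute literally as matrices in $\Sp_2(\R)$, so the two double cosets coincide trivially.

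The decisive case is $X = M \in \mathbb{G}$ with $\gcd(d, \nu(M)) = 1$. Using \eqref{gl_19neu} I choose a block diagonal representative $M = \diag(A, D)$ with $D = A'^{-1}$. Then
\[
 W_d M = \diag(V_d A,\, V_d'^{-1} D), \qquad M W_d = \diag(A V_d,\, D V_d'^{-1}),
\]
and both are again of block diagonal form $\diag(A^{\ast}, D^{\ast})$ with $A^{\ast} (D^{\ast})' = E$. Seeking a block diagonal $\Sigma_N$-equivalence $\sigma_1(W_d M)\sigma_2 = M W_d$ with $\sigma_i = \diag(L_i, L_i'^{-1})$ and $L_i$ in the $\Gamma_0(N)$-type congruence subgroup cut out by \eqref{gl_11} in the block diagonal Levi, the $A$- and $D$-block conditions become $L_1 V_d A L_2 = A V_d$ and $L_1'^{-1} V_d'^{-1} D L_2'^{-1} = D V_d'^{-1}$; these are equivalent under $D = A'^{-1}$ by a transpose-inverse manipulation. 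The resulting single identity is exactly the Atkin--Lehner/Hecke commutativity on the $D$-block, with $V_d'^{-1}$ playing the role of the Atkin--Lehner element attached to $d$ and $\nu(D) = \nu(M)$ coprime to $d$ by hypothesis. This is precisely Corollary 3 of \cite{K2} applied to the $D$-block.

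The main obstacle I expect is the transpose-inverse bookkeeping in the reduction: one must verify that the $A$-block and $D$-block constraints really are equivalent under $D = A'^{-1}$ (so that a single $2\times 2$ Atkin--Lehner commutativity suffices rather than two independent ones) and that the level parameter extracted from the $D$-block indeed equals $\nu(M)$. Both are direct calculations in the spirit of Lemma~\ref{lemma_2}, but they are the only non-routine ingredients of the plan.
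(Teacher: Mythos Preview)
Your proposal is correct and follows essentially the same route as the paper, which only records the ingredients \eqref{gl_16neu}, \eqref{gl_18neu}, \eqref{gl_19neu} and Corollary~3 in \cite{K2} applied to the $D$-block; you have simply unpacked them. Your observation that $W_d$ and $\tfrac{1}{\sqrt m}\diag(E,mE)$ literally commute is a pleasant shortcut for that case, and your transpose-inverse check that the $A$-block and $D$-block constraints coincide under $D=A'^{-1}$ is the right way to see that a single $2\times2$ Atkin--Lehner identity suffices.
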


We write $u\mid N^{\infty}$ if the prime divisors of $u$ divide $N$. Given $d\mid N$ and $u,v \in\N$, let 
\[
 u=u_1 u_2, \;\; u_1\mid d^{\infty}, \;\; gcd(du_1,u_2) = 1.
\]

As we may choose $\alpha,\delta\in du_1\Z$ and $\beta,\gamma \in u_2 \Z$ in \eqref{gl_13neu}, we get 
\begin{gather*}\label{gl_20neu}\tag{20}
W_d \diag(1,u,u^2 v,uv) W_d \in \Sigma_N \diag(u_1,u_2,u_1u^2_2 v,u^2_1u_2 v)\Sigma_N.
\end{gather*}

\begin{lemma}\label{Lemma_8} %%% (8)
Let $N\in \N$ be squarefree and $M\in \G$. Then the double coset $\Sigma_N M\Sigma_N$ possesses a unique representative of the form
\[
 W_d \tfrac{1}{\sqrt{u^2_1 u^2_2 v}}\diag(u_1,u_2,u_1 u^2_2 v,u^2_1 u_2 v),
\]
\[
 d\mid N,\; u_1,u_2,v\in \N, \;\; u_1\mid N^{\infty},\;\; gcd(u_1,u_2) = 1,\;\; \nu(M) = du^2_1 u^2_2 v.
\]
\end{lemma}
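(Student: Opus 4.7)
The strategy is to refine the $\Sigma^{\ast}_N$-double-coset representative from Lemma~\ref{Lemma_6}(a) into a $\Sigma_N$-representative by pushing the Atkin--Lehner elements $W_d$ through the diagonal matrix via the relations \eqref{gl_16neu}, \eqref{gl_18neu}, and \eqref{gl_20neu}.

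Let $M\in\G$. Lemma~\ref{Lemma_6}(a) yields unique $u,v\in\N$ with $\Sigma^{\ast}_N M\Sigma^{\ast}_N = \Sigma^{\ast}_N M_0 \Sigma^{\ast}_N$, where $M_0 = \tfrac{1}{\sqrt{u^2v}}\diag(1,u,u^2v,uv)$. By \eqref{gl_17neu}, $M \in \Sigma_N W_{d_0} M_0 W_{e_0} \Sigma_N$ for some $d_0, e_0\mid N$. Using \eqref{gl_18neu}, then inserting the identity $\Sigma_N = (\Sigma_N W_{e_0}\Sigma_N)^2$ (which holds by \eqref{gl_16neu}), and regrouping by associativity, one gets
\[
 \Sigma_N W_{d_0} M_0 W_{e_0} \Sigma_N
 = (\Sigma_N W_{d_0}\Sigma_N \cdot \Sigma_N W_{e_0}\Sigma_N)(\Sigma_N W_{e_0}\Sigma_N \cdot \Sigma_N M_0 \Sigma_N \cdot \Sigma_N W_{e_0}\Sigma_N).
\]
By \eqref{gl_16neu}, the first bracket collapses to $\Sigma_N W_d \Sigma_N$ with $d := d_0 e_0/\gcd(d_0,e_0)^2 \mid N$, while by \eqref{gl_18neu} the second is $\Sigma_N W_{e_0} M_0 W_{e_0} \Sigma_N$. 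Applying \eqref{gl_20neu} with $e_0$ in place of $d$ and the (unique) factorization $u = u_1 u_2$ determined by $u_1\mid e_0^{\infty}$, $\gcd(e_0 u_1, u_2) = 1$, the latter equals $\Sigma_N M_1 \Sigma_N$ with $M_1 := \tfrac{1}{\sqrt{u_1^2 u_2^2 v}}\diag(u_1,u_2,u_1 u_2^2 v,u_1^2 u_2 v)$. Combining, $\Sigma_N M\Sigma_N = \Sigma_N W_d M_1 \Sigma_N$, as required; the conditions $u_1\mid N^{\infty}$ and $\gcd(u_1,u_2)=1$ are inherited from $e_0\mid N$ and $\gcd(e_0 u_1,u_2) = 1$.

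The identity $\nu(M) = d u_1^2 u_2^2 v$ will be verified by writing $\sqrt{d u_1^2 u_2^2 v}\,W_d M_1$ out explicitly via \eqref{gl_13neu}, checking that it is integral and of the form \eqref{gl_11} (so $\nu(M)\mid d u_1^2 u_2^2 v$), and that the greatest common divisor of its entries is $1$ (using $\det V_d = 1$ and $\gcd(u_1,u_2) = 1$), which forces equality. For uniqueness, two admissible quadruples producing the same $\Sigma_N$-double-coset also produce the same $\Sigma^{\ast}_N$-double-coset, so $u = u_1 u_2$ and $v$ are determined by Lemma~\ref{Lemma_6}(a), and the $\nu$-formula then forces $d$. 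To separate different factorizations $u = u_1 u_2$ for fixed $(u,v,d)$, I would use an orbit-stabilizer count: the number of admissible quadruples for given $u,v$ is $2^{\omega(N)+\omega(\gcd(u,N))}$, which matches the number of $\Sigma_N$-double-cosets inside $\Sigma^{\ast}_N M_0 \Sigma^{\ast}_N$ under the natural action of $(\Sigma^{\ast}_N/\Sigma_N)^2 \cong (\Z/2\Z)^{2\omega(N)}$, so surjectivity from the existence part forces bijectivity.

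The main obstacle is the last step, namely identifying the stabilizer of $\Sigma_N M_0 \Sigma_N$ inside $(\Sigma^{\ast}_N/\Sigma_N)^2$. I expect to handle this prime-by-prime: it suffices to verify, locally at each prime $p\mid N$, that the stabilizer is generated by the diagonal pair $(W_p,W_p)$ precisely when $p\nmid u$, which is a finite check using \eqref{gl_20neu} and Lemma~\ref{Lemma_7}. The existence step and the $\nu$-calculation, by contrast, are relatively mechanical once \eqref{gl_16neu}, \eqref{gl_18neu}, and \eqref{gl_20neu} are in hand.
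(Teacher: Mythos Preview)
Your existence argument and the identification of $u=u_1u_2$, $v$, and $d$ are essentially the paper's proof, only written out in more detail. The real divergence is in separating the factorizations $u=u_1u_2$, and here your orbit--stabilizer plan has a circularity.

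To carry out the count you must show that $(W_p,W_p)$ lies in the stabilizer of $\Sigma_N M_0\Sigma_N$ \emph{precisely} when $p\nmid u$. The ``if'' direction is indeed immediate from \eqref{gl_20neu}. But the ``only if'' direction --- that $(W_p,W_p)\notin\mathrm{Stab}$ when $p\mid u$ --- amounts to $\Sigma_N W_p M_0 W_p\Sigma_N \neq \Sigma_N M_0\Sigma_N$, and by \eqref{gl_20neu} this is exactly the assertion that the $\Sigma_N$-double cosets of $\tfrac{1}{u\sqrt{v}}\diag(u_1,u_2,u_1u_2^2v,u_1^2u_2v)$ for the two factorizations $(1,u)$ and $(p^{v_p(u)},\,u/p^{v_p(u)})$ are distinct. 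That is the very uniqueness you are trying to prove. Neither \eqref{gl_20neu} nor Lemma~\ref{Lemma_7} supplies this: both tell you \emph{which} double coset $W_p M_0 W_p$ lands in, not that it differs from $\Sigma_N M_0\Sigma_N$. The invariant $\nu$ does not help either, since both matrices have $\nu=u^2v$. And the inequality on $|\mathrm{Stab}|$ you \emph{can} establish, namely $|\mathrm{Stab}|\geq 2^{\omega(N)-\omega(\gcd(u,N))}$, points the wrong way for a surjectivity-plus-counting argument.

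The paper closes this gap with a direct invariant rather than a count: by \eqref{gl_12neu}, for any $L\in\Sigma_N$ the $2\times 2$ submatrix $\bigl(\begin{smallmatrix}a_1&b_1\\c_1&d_1\end{smallmatrix}\bigr)$ lies in $SL_2(\Z/N\Z)$, so left and right multiplication by $\Sigma_N$ preserves the rank over $\Z/p\Z$ of this corner block for every prime $p\mid N$. For the normalized representative $\sqrt{u^2v}\,M_1=\diag(u_1,u_2,u_1u_2^2v,u_1^2u_2v)$ this corner block is $\diag(u_1,\,u_1u_2^2v)$, whose rank modulo $p$ is $0$ when $p\mid u_1$ and $1$ when $p\mid u_2$ (for $p\mid\gcd(u,N)$). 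This separates the factorizations immediately. You should replace the stabilizer step with this rank invariant.
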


\begin{proof}
The existence is a consequence of Lemma \ref{Lemma_6}, \eqref{gl_17neu} and \eqref{gl_20neu}. For the uniqueness we observe that $u=u_1u_2,v$ are unique due to Lemma \ref{Lemma_6} and $d$ in view of $\nu(M) = du^2v$. By virtue of \eqref{gl_12neu}, the rank of $\diag(u_1,u_1 u^2_2 v)$ over $\Z/p\Z$ is an invariant of the $\Sigma_N$-double coset for all $p\in \Pb$, $p\mid N$. This yields the uniqueness of $u_1$ and $u_2$. 
\end{proof}

Next, we derive a general form of multiplicativity.

\begin{lemma}\label{Lemma_9} %%% (9)
Let $N\in\N$ be squarefree and $L,M\in \G$ such that $\nu(L)$ and $\nu(M)$ are coprime. Then
\[
\Sigma_N L \Sigma_N \cdot \Sigma_N M \Sigma_N = \Sigma_N L M \Sigma_N = \Sigma_N M \Sigma_N \cdot \Sigma_N L \Sigma_N.
\]
\end{lemma}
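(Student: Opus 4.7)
My plan is to reduce to the $\Sigma^{\ast}_N$-level multiplicativity of Lemma \ref{Lemma_6}(b) and then separate the $W_d$-factors from the diagonal parts of the Lemma \ref{Lemma_8} canonical representatives, using Lemma \ref{Lemma_7}, \eqref{gl_16neu} and \eqref{gl_18neu}.

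First I would observe that, since $\nu(L)$ and $\nu(M)$ share no prime factors, the common denominators of $\widetilde{L}$ and $\widetilde{M}$ are coprime, so Lemma \ref{Lemma_6}(b) yields $\Sigma^{\ast}_N L \Sigma^{\ast}_N \cdot \Sigma^{\ast}_N M \Sigma^{\ast}_N = \Sigma^{\ast}_N LM \Sigma^{\ast}_N$. Hence any $\Sigma_N$-double coset occurring in $\Sigma_N L \Sigma_N \cdot \Sigma_N M \Sigma_N$ is contained in this single $\Sigma^{\ast}_N$-double coset, and by \eqref{gl_17neu} has the form $\Sigma_N W_d LM W_e \Sigma_N$ for some $d, e \mid N$. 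Next I would apply Lemma \ref{Lemma_8} to replace $L$ and $M$ (within their $\Sigma_N$-double cosets) by their canonical representatives $W_{d_L} L_1$ and $W_{d_M} M_1$, where $L_1, M_1$ are diagonal. Coprimality of $\nu(L)$ and $\nu(M)$ forces $\gcd(d_L, d_M) = 1$ and $\gcd(\nu(L_1), \nu(M_1)) = 1$. Splitting off the $W$-parts via \eqref{gl_18neu}, commuting $[W_{d_M}]$ past $[L_1]$ via Lemma \ref{Lemma_7} (valid since $\gcd(d_M, \nu(L_1)) = 1$), and combining the $W$-parts via \eqref{gl_16neu} then gives, in the Hecke algebra,
\[
\Sigma_N L \Sigma_N \cdot \Sigma_N M \Sigma_N = \Sigma_N W_{d_L d_M} \Sigma_N \cdot \Sigma_N L_1 \Sigma_N \cdot \Sigma_N M_1 \Sigma_N.
\]

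The remaining key point is coprime multiplicativity for the diagonal pieces: I would show $\Sigma_N L_1 \Sigma_N \cdot \Sigma_N M_1 \Sigma_N = \Sigma_N L_1 M_1 \Sigma_N$. A direct computation shows that $L_1 M_1$ is again a diagonal matrix in Lemma \ref{Lemma_8} canonical form with trivial $W$-part: its parameters $(u_1, u_2, v)$ are the componentwise products of those of $L_1$ and $M_1$, and $\gcd(\nu(L_1), \nu(M_1)) = 1$ together with the squarefreeness of $N$ preserves the constraints $u_1 \mid N^{\infty}$ and $\gcd(u_1, u_2) = 1$. Consequently, within $\Sigma^{\ast}_N L_1 M_1 \Sigma^{\ast}_N$ the only $\Sigma_N$-double coset without a $W_d$-factor is $\Sigma_N L_1 M_1 \Sigma_N$, and the standard multiplicity-one argument (cf.\ \cite{K1}, Lemma V(6.1), as in the proof of Lemma \ref{Lemma_4}) will identify the diagonal product with this single double coset. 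Since $LM$ itself lies in $\Sigma_N L \Sigma_N \cdot \Sigma_N M \Sigma_N$, combining the steps gives $\Sigma_N L \Sigma_N \cdot \Sigma_N M \Sigma_N = \Sigma_N LM \Sigma_N$, and the symmetric identity follows by interchanging $L$ and $M$. The main obstacle is the diagonal coprime-multiplicativity step: excluding any $\Sigma_N$-double coset carrying a nontrivial $W_d$-factor from the product $\Sigma_N L_1 \Sigma_N \cdot \Sigma_N M_1 \Sigma_N$ hinges on the uniqueness of Lemma \ref{Lemma_8} canonical representatives and on tracking the invariants $(d, u_1, u_2, v)$ through the multiplication.
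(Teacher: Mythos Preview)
Your overall strategy---reduce to the diagonal case via \eqref{gl_18neu}, Lemma \ref{Lemma_7} and \eqref{gl_16neu}, then handle the diagonal case by first landing in the single $\Sigma^{\ast}_N$-double coset via Lemma \ref{Lemma_6}(b)---is exactly the paper's approach. However, the key step for the diagonal pieces contains a genuine error and a misdiagnosed obstacle.

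The sentence ``within $\Sigma^{\ast}_N L_1 M_1 \Sigma^{\ast}_N$ the only $\Sigma_N$-double coset without a $W_d$-factor is $\Sigma_N L_1 M_1 \Sigma_N$'' is false. By Lemma \ref{Lemma_6} the $\Sigma^{\ast}_N$-double coset fixes only the pair $(u,v)=(u_1u_2\cdot r_1r_2,\,vs)$; within it, the $\Sigma_N$-double cosets with $d=1$ correspond to \emph{all} factorizations $ur=t_1t_2$ with $t_1\mid N^{\infty}$ and $\gcd(t_1,t_2)=1$, and there are in general several of these (e.g.\ if two distinct primes dividing $N$ divide $ur$). So knowing that $L_1M_1$ is already in Lemma \ref{Lemma_8} canonical form with trivial $W$-part does not single it out. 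Conversely, what you flag as the ``main obstacle''---ruling out nontrivial $W_d$---is actually automatic: every matrix in $\Sigma_N L_1\Sigma_N M_1\Sigma_N$ is, after scaling by $\sqrt{u^2vr^2s}$, of the shape \eqref{gl_11}, so $\nu=u^2vr^2s$ and hence $d=\nu/(ur)^2(vs)=1$ in the Lemma \ref{Lemma_8} representative.

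What is genuinely needed, and what the paper supplies, is an invariant distinguishing the $d=1$ double cosets from one another: for each prime $p\mid N$, the $\bmod\ p$ rank of the $2\times 2$ block $\left(\begin{smallmatrix}a_1&b_1\\c_1&d_1\end{smallmatrix}\right)$ in \eqref{gl_12neu} is a $\Sigma_N$-double-coset invariant, and comparing these ranks for $\diag(u_1,u_1u_2^2v)\cdot U\cdot\diag(r_1,r_1r_2^2s)$ (with $\det U\equiv 1\bmod N$) against $\diag(t_1,t_1t_2^2vs)$ forces $t_1=u_1r_1$, $t_2=u_2r_2$. Once the unique double coset is pinned down, the multiplicity-one argument you cite finishes the diagonal case, and the general case follows as you indicate.
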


\begin{proof}
We choose representatives
\begin{align*}
 L & = W_d \tfrac{1}{\sqrt{u^2_1 u^2_2 v}} \diag(u_1,u_2,u_1 u^2_2 v,u^2_1 u_2 v), \;\; u= u_1 u_2,  \\
 M & = W_e \tfrac{1}{\sqrt{r^2_1 r^2_2 s}} \diag(r_1,r_2,r_1 r^2_2 s,r^2_1 r_2 s), \;\; r= r_1 r_2
\end{align*}
in the form of Lemma \ref{Lemma_8}.

First, let $d= e=1$. It follows from Lemma \ref{Lemma_6} that the product consists of double cosets in $\Sigma^*_N L M \Sigma^*_N$. As $\sqrt{u^2 v r^2 s}\,K$ for all the matrices $K$ in $\Sigma_N L \Sigma_N M \Sigma_M$ are of the form \eqref{gl_11}, representatives of the double cosets, which might occur, are given by
\[
 \tfrac{1}{\sqrt{u^2 v r^2 s}} \diag(t_1,t_2,t_1t^2_2 v s,t^2_1 t_2 v s), \;\; t_1\mid N^{\infty},\; gcd(t_1,t_2) = 1, \; t_1t_2 = ur
\]
due to \eqref{gl_20neu}. If follows from \eqref{gl_12neu} that for $U\in \Z^{2\times 2}$ with $\det U \equiv 1\bmod{N}$ the ranks of
\[
 \begin{pmatrix}
  u_1 & 0 \\ 0 & u_1u^2_2 v
 \end{pmatrix} U
 \begin{pmatrix}
  r_1 & 0 \\ 0 & r_1r^2_2 s
 \end{pmatrix}\; \text{and}\; 
  \begin{pmatrix}
  t_1 & 0 \\ 0 & t_1t^2_2 vs
 \end{pmatrix}
\]
over $\Z/p \Z$ coincide for all $p\mid N$. Thus, we have $t_1=u_1 r_1$ and $t_2=u_2 r_2$. The coefficient is $1$ due to the standard argument (cf. \cite{K1}, Lemma V(6.1)).

The case of general $d$ and $e$ can then be obtained from \eqref{gl_18neu}. Lemma \ref{Lemma_7} and \eqref{gl_16neu} imply
\[
 \Sigma_N L \Sigma_N \cdot \Sigma_N M \Sigma_N = \Sigma_N M \Sigma_N \cdot \Sigma_N L \Sigma_N = \Sigma_N W_{de} K \Sigma_N
\]
for some diagonal matrix $K$, since $du^2v$ and $er^2 s$ are coprime. Thus, the product is a single double coset and we may choose $M L$ or $LM$ as a representative.
\end{proof}

For a prime $p$ we define the primary component 
\[
 \G_p: = \{M\in \G;\; \nu(M) = p^r,\,r\in \N_0\}.
\]

% Moreover we need the integral paramodular group
% \[
%  \Omega_N = \diag(1,1,1,1/N) \Sigma_N \diag(1,1,1,N).
% \]
% According to \eqref{gl_11} each $M\in\Omega_N$ has the form
% \begin{align*}
%  & M = \begin{pmatrix}
%       A & B \\ C & D
%      \end{pmatrix}, \quad
%  A = \begin{pmatrix}
%       a_1 & a_2 N  \\  a_3 & a_4
%      \end{pmatrix}, \quad
%  B = \begin{pmatrix}
%       b_1 & b_2 N  \\  b_3 & b_4
%      \end{pmatrix}, \\[1ex]
%  & C = \begin{pmatrix}
%       c_1 & c_2 N  \\  c_3 & c_4
%      \end{pmatrix}, \quad
%  D = \begin{pmatrix}
%       d_1 & d_2 N  \\  d_3 & d_4
%      \end{pmatrix},
% \end{align*}
% where $a_i,b_i,c_i,d_i\in\Z$ and
% \begin{gather*}\tag{16}\label{gl_16}
% \begin{pmatrix}
%  a_1 & b_1 \\ c_1 & d_1
% \end{pmatrix} \bmod N,\quad
% \begin{pmatrix}
%  a_4 & b_4 \\ c_4 & d_4
% \end{pmatrix} \bmod N\;\; \text{belong to} \;\, SL_2(\Z/N\Z).
% \end{gather*}
% 
% \begin{lemma}\label{Lemma_7} %%% (7)
%  Let $N\in \N$ be squarefree. Given $L,M\in\G$ and coprime $l,m\in\N$ such that $\sqrt{l} L$ and $\sqrt{m} M$ are integral, then
% \begin{gather*}\tag{17}\label{gl_17}
%  \Sigma_N L\Sigma_N \cdot \Sigma_N M \Sigma_N = \Sigma_N M \Sigma_N \cdot \Sigma_N L \Sigma_N.
% \end{gather*}
% \end{lemma}
% 
% \begin{proof}
% 
% \end{proof}

The analog of Theorem \ref{Theorem_4} is

\begin{theorem}\label{Theorem_5} %%% (5)
Let $N\in \N$ be squarefree. Then the Hecke algebra $\Hcal(\Sigma_N,\G)$ is the tensor product of the primary components $\Hcal(\Sigma_N,\G_p)$, $p\in\Pb$. One has
\[
 \Hcal(\Sigma_N,\G_p) = 
 \begin{cases}
  \Z[\Tcal_{N,1}(p), \Tcal_{N,2}(p)], \quad & \text{if}\;\; p\nmid N, \\
  \Z[\Sigma_N W_p \Sigma_N,\Tcal_{N,1}(p),\Tcal_{N,2}(p)] \quad & \text{if}\;\; p\mid N.
 \end{cases}
\]
The primary component $\Hcal(\Sigma_N,\G_p)$ is commutative if and only if $p\nmid N$.
\end{theorem}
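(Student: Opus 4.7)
My plan is to first establish the tensor product decomposition, then analyze the two kinds of primary components separately.

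\textbf{Step 1 (Tensor product decomposition).} The key input is Lemma~\ref{Lemma_9}: for $L,M\in\G$ with $\gcd(\nu(L),\nu(M))=1$ one has $\Sigma_N L\Sigma_N\cdot\Sigma_N M\Sigma_N=\Sigma_N LM\Sigma_N$ and the two cosets commute. Given any $M\in\G$ with $\nu(M)=\prod_p p^{r_p}$, Lemma~\ref{Lemma_8} lets me factor the canonical representative into $p$-primary pieces $M_p\in\G_p$; Lemma~\ref{Lemma_9} then yields $\Sigma_N M\Sigma_N=\prod_p\Sigma_N M_p\Sigma_N$, and uniqueness of the factorization comes from the uniqueness in Lemma~\ref{Lemma_8}. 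Hence $\Hcal(\Sigma_N,\G)=\bigotimes_p\Hcal(\Sigma_N,\G_p)$.

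\textbf{Step 2 (Case $p\nmid N$).} In the Lemma~\ref{Lemma_8} normal form, the conditions $d\mid N$ and $u_1\mid N^\infty$ combined with $\nu(M)=d u_1^2 u_2^2 v=p^r$ and $\gcd(N,p)=1$ force $d=1$ and $u_1=1$. So every $\Sigma_N$-double coset in $\G_p$ has representative $\tfrac{1}{\sqrt{u_2^2 v}}\diag(1,u_2,u_2^2 v,u_2 v)$, matching exactly the $\Sigma^*_N$-parametrization of Lemma~\ref{Lemma_6}(a). The proof of Theorem~\ref{Theorem_3} now applies essentially verbatim (either directly, or transferred via the isomorphism of Lemma~\ref{Lemma_5}): an induction on $r$ shows that $\Tcal_{N,1}(p)$ and $\Tcal_{N,2}(p)$ produce every $\Sigma_N$-double coset of level $\le p^r$, and the counting match --- both the number of such double cosets and the number of monomials $\Tcal_{N,1}(p)^a\Tcal_{N,2}(p)^b$ of total degree $\le r$ equal $\binom{r+2}{2}$ --- forces algebraic independence. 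The primary component is therefore the commutative polynomial ring $\Z[\Tcal_{N,1}(p),\Tcal_{N,2}(p)]$.

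\textbf{Step 3 (Case $p\mid N$, and main obstacle).} Now $d\in\{1,p\}$ is allowed and $u_1$ may range over powers of $p$, roughly doubling the count of $\Sigma_N$-double cosets at each $p$-level compared with Step~2. By \eqref{gl_17neu}--\eqref{gl_18neu} and Lemma~\ref{Lemma_7}, only $W_e$ with $e\in\{1,p\}$ contribute inside $\G_p$, the remaining $W_e$ ($e\mid N/p$) commuting with $\G_p$. The relation \eqref{gl_20neu}, applied with $d=p$, shows that $W_p\cdot\diag(1,u,u^2v,uv)\cdot W_p$ lies in the $\Sigma_N$-double coset with parameters $(u_1,u_2,v)=(u,1,v)$; together with \eqref{gl_18neu} this exhibits the ``non-base'' double cosets (those with $d=p$ or with $u_1>1$) as products of $\Sigma_N W_p\Sigma_N$ with elements of $\Z[\Tcal_{N,1}(p),\Tcal_{N,2}(p)]$ produced in Step~2, and iteration yields generation by the three listed elements. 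Non-commutativity of $\Hcal(\Sigma_N,\G_p)$ is obtained by specializing the argument of Corollary~\ref{Corollary_2}: the same matrix $M=\tfrac{1}{p}\diag(1,p,p^2,p)\in\G_p$ and a choice of $V_p$ with $\det V_p=1$ (which exists because $\gcd(p^2,N)=p$ for squarefree $N$ with $p\mid N$) produce the analogous contradiction to $\Sigma_N W_p M W_p\Sigma_N=\Sigma_N M\Sigma_N$. The delicate point is generation: non-commutativity forces tracking the order of $W_p$-factors, and \eqref{gl_20neu} only identifies the double-coset class of $W_pDW_p$ rather than giving a clean product expansion, so cleanly establishing generation requires a simultaneous induction on the $p$-adic level $r$ and on the number of $W_p$-factors on either side of the diagonal block, with Step~2 supplying the base of the inner induction.
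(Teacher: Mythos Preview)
Your Step~1 matches the paper. The main divergence is that you overlook how much is already done: the paper simply invokes Lemma~\ref{Lemma_9} for the tensor decomposition and Corollary~\ref{Corollary_2} for generation by the listed double cosets, so after one sentence only the commutativity assertions remain. Your Steps~2 and~3 rebuild generation from scratch (and you yourself flag Step~3 as incomplete), which is unnecessary.

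There is a genuine gap in your Step~2. Establishing that the $\Sigma_N$-double cosets in $\G_p$ for $p\nmid N$ are parametrized by the same diagonals as in Lemma~\ref{Lemma_6}(a), and that the monomial count matches $\binom{r+2}{2}$, does \emph{not} give commutativity: a ring generated by two elements with a filtration whose graded pieces have the right rank can still be noncommutative (you would only get a relation of the form $\Tcal_{N,1}(p)\Tcal_{N,2}(p)-\Tcal_{N,2}(p)\Tcal_{N,1}(p)\in\Hcal_{N,p}^{(1)}$, not that it vanishes). Nor can you simply ``transfer via Lemma~\ref{Lemma_5}'', since that isomorphism identifies $\widehat{\Gamma}_N$ with $\Sigma^*_N$, not with $\Sigma_N$; matching double-coset \emph{representatives} does not by itself identify the two Hecke multiplications. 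The paper's route is different and avoids this: for $p\nmid N$ the congruence conditions defining $\Sigma_N$ are invisible to $p$-power matrices, so one may choose the same right-coset representatives as for $\Sigma_1=\Sp_2(\Z)$, whence $\Hcal(\Sigma_N,\G_p)$ is identified with the classical local Siegel Hecke algebra, which is commutative.

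For non-commutativity when $p\mid N$, your adaptation of the Corollary~\ref{Corollary_2} argument works, but the paper is more direct: it applies \eqref{gl_20neu} with $d=p$, $u=p$, $v=1$ to read off
\[
\Sigma_N W_p\Sigma_N\cdot\Tcal_{N,2}(p)\cdot\Sigma_N W_p\Sigma_N=\Sigma_N\tfrac{1}{p}\diag(p,1,p,p^2)\Sigma_N,
\]
which Lemma~\ref{Lemma_8} (via the invariant $u_1$) shows is distinct from $\Tcal_{N,2}(p)$, so $\Sigma_N W_p\Sigma_N$ and $\Tcal_{N,2}(p)$ do not commute.
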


\begin{proof}
According to Lemma \ref{Lemma_9} and Corollary \ref{Corollary_2}, the result on the commutativity remains to be proved. If $p\nmid N$ we may choose the same representatives of the right cosets as in the case $N=1$, where the result follows from the standard case (cf. \cite{An}, \cite{Fr}, \cite{K1}).

The same calculations as above show that
\[
 \Sigma_N W_p \Sigma_N \cdot \Tcal_{N,2}(p) \cdot \Sigma_N W_p \Sigma_N = \Sigma_N\tfrac{1}{p} \diag(p,1,p,p^2)\Sigma_N,
\]
which is different from $\Tcal_{N,2}(p)$ for $p\mid N$ due to Lemma \ref{Lemma_8}. Hence we have
\[
 \Sigma_N W_p \Sigma_N \cdot T_{N,2} (p) \neq T_{N,2}(p) \cdot \Sigma_N W_p \Sigma_N.
\]
\vspace*{-8ex}\\
\end{proof}

We add

\begin{remarkson}
a) We have for all $p\mid N$
\[
  \Sigma_N W_p \Sigma_N \cdot T_{N,1} (p) = T_{N,1}(p) \cdot \Sigma_N W_p \Sigma_N
\]
according to \eqref{gl_18neu}. One can prove along the lines of Lemma \ref{Lemma_9} that 
\[
 \Tcal_{N,1}(p)\cdot \Tcal_{N,2}(p) = \alpha \Tcal_{N,1}(p) + \beta\Sigma_N \tfrac{1}{\sqrt{p^3}} \diag(1,p,p^3,p^2)\Sigma_N
\]
for some $\alpha,\beta \in\N$. All the double cosets are invariant under $M\mapsto M^{-1}$. Hence, the induced anti-homomorphism from \cite{K1}, Theorem I(7.1), yields
\[
 \Tcal_{N,1}(p) \cdot \Tcal_{N,2}(p) = \Tcal_{N,2}(p)\cdot \Tcal_{N,1}(p)\;\; \text{for all} \;\; p\in \Pb.
\]
b) Considering $Nr^2$, $r\in \N$, the paramodular group $\Sigma_{Nr^2}$ is conjugate to a subgroup of $\Sigma_N$ of finite index. The same is true for the orthogonal groups in section 2 an 3.
\end{remarkson}
\bigskip
\noindent
\small{The authors thank the anonymous referee for helpful comments.}

\vspace{8ex}

\normalsize

\renewcommand\refname{References}


\begin{thebibliography}{99}


%\bibitem[A1]{A1} Andrianov,~A.N.: Modular decent and the Saito-Kurokawa conjecture. Invent. Math. \textbf{53}, 267-280 (1979).

\bibitem{An} A.N.~Andrianov, \textit{Quadratic forms and Hecke operators} (Springer, 1987).

% \bibitem[B]{B} Borcherds,~R.E.: Automorphic forms with singularities on Grassmannians. Invent. Math. \textbf{132}, 491-562 (1998).
% 
% \bibitem[EZ]{EZ} Eichler,~M., Zagier,~D.: The Theory of Jacobi Forms. Progress Math. \textbf{55}, Birkhäuser Verlag, Boston (1985).
% 
% \bibitem[FPRS]{FPRS} Farmer,~D., Pitale,~A., Ryan,~N., Schmidt,~R.: Characterizations of the Saito-Kurokawa lifting. Rocky Mountain J. Math. \textbf{43}, 1747-1757 (2013).

\bibitem{Fr} E.~Freitag, \textit{Siegelsche Modulfunktionen} (Springer, 1983).

\bibitem{Ga} J.~Gallenkämper, \textit{Themen der Hecke-Theorie zur orthogonalen Gruppe $O(2,n+2)$}, PhD thesis (RWTH Aachen, 2017).

\bibitem{G} V.A. Gritsenko, Arithmetical lifting and its applications, \textit{London Math. Soc. Lect. Notes Ser.} \textbf{215} (1995) 103-126.

\bibitem{GH1} V.A.~Gritsenko and K.~Hulek, Commutator coverings of modular threefolds, \textit{Duke Math. J.} \textbf{94} (1998) 509-542.

\bibitem{GH2} V.A.~Gritsenko and K.~Hulek, Minimal Siegel modular threefolds, \textit{Math. Proc. Cambridge Phil. Soc.} \textbf{123} (1998) 461-485.

%\bibitem[GH3]{GH3} Gritsenko,~V., Hulek,~K.: Uniruledness of orthogonal modular varietees. J. Algebra Geom. \textbf{23} (2014), 711-725.

\bibitem{HK} B.~Heim and A.~Krieg, \textit{The Maaß Space for Paramodular Groups}, arXiv: 1711.06619 (2017).

\bibitem{HS} T.~Hina and T.~Sugano, On the local Hecke series of some classical groups over $\wp$-adic fields, \textit{J. Math. Soc. Japan} \textbf{35} (1983) 133-152.

\bibitem{Koe} G.~Köhler, Erweiterungsfähigkeit paramodularer Gruppen, \textit{Nachr. Akad. Wiss. Göttingen, II. math.-phys.} Kl. 1967 (1968) 229-238.

\bibitem{K1} A.~Krieg, Hecke algebras, \textit{Mem. Am. Math. Soc.} \textbf{435} (1990).

\bibitem{K2} A.~Krieg, The Hecke algebras for the Fricke groups, \textit{Result. Math.} \textbf{34} (1998) 342-358.

\bibitem{K3} A.~Krieg, Integral Orthogonal Groups, in \textit{Dynamical Systems, Number Theory and Applications}, ed.  T.~Hagen, F.~Rupp, J.~Scheurle (World Scientific, 2016) 177-195.

\bibitem{N} M.~Newman, \textit{Integral Matrices} (Academic Press, 1972).

\bibitem{RS} B.~Roberts and R.~Schmidt, On modular forms for the paramodular groups. \textit{Automorphic forms and zeta functions}, ed. S.~Böcherer et al. (World Scientific, 2006) 334-364.

\bibitem{RS2} B.~Roberts and R.~Schmidt, \textit{Local Newforms for $GSp(4)$}, Lecture Notes Math. \textbf{1918} (Springer, 2007).

\bibitem{Sh} G.~Shimura, \textit{Introduction to the arithmetic theory of automorphic functions} (Iwanami Shoten and Princeton University Press, 1971).

\bibitem{Su} T.~Sugano, Jacobi forms and the theta lifting, \textit{Comm. Math. Univ. Sancti Pauli} \textbf{44} (1995) 1-58.




% \bibitem[GN]{GN} Gritsenko,~V., Nikulin,~V.: The Igusa modular forms and ``the simplest'' Lorentzian Kac-Moody algebras. Matem. Sbornik \textbf{187}, 1601-1643 (1996).
% 
% \bibitem[H1]{H1} Heim,~B.: Pullbacks of Eisenstein Series, Hecke-Jacobi Theory and Automorphic $L$-Functions. Proc. Symp. Pure Math. \textbf{66}, 201-238 (1999).
% 
% \bibitem[H2]{H2} Heim,~B.: On the Spezialschar of Maass. Int. J. Math. Math. Sci. \textbf{23} (2010).
% 
% \bibitem[H3]{H3} Heim,~B.: Distribution theorems for Saito-Kurokawa lifts. In Manickam, M. (ed.): Number Theory. Ramanujan Soc. Lect. Notes Series \textbf{15}, 31-41 (2011).
% 
% \bibitem[HM1]{HM1} Heim,~B., Murase,~A.: A characterization of the Maass space on $O(2,m+2)$ by symmetries. Int. J. Math. \textbf{23}  (2012).
% 
% \bibitem[HM2]{HM2} Heim,~B., Murase,~A.: A characterization of Holomorphic Bor\-cherds Lifts by Symmetries. International Mathematics Research Notices (2014). doi: 10.1093/imm/rnv021

% \bibitem[M]{M} Maaß,~H.: Über eine Spezialschar von Modulformen zweiten Grades I, II, III. Invent. Math. \textbf{52}, 95-104; \textbf{53}, 249-253; \textbf{53}, 255-265 (1979).
% 
% \bibitem[PS]{PS} Pitale,~A., Schmidt,~R.: Ramanujan-type results for Siegel cusp forms of degree $2$, J. Raman. Math. Soc. \textbf{24}, 87-111 (2009).
% 
% \bibitem[RS]{RS} Roberts,~B., Schmidt,~R.: On modular forms for the paramodular gruoup. In: Automorphic forms and zeta functions, Proc. Conf. Mem. Tsuneo Arakawa, World Scientific (2006).

\end{thebibliography}
\end{document}